\documentclass[final,leqno]{siamltex704}
\usepackage{amsmath}
\usepackage{graphicx}
\usepackage[notcite,notref]{showkeys}
\usepackage{mathrsfs}
\usepackage{float}
\usepackage{amsfonts,amssymb}
\usepackage{dsfont}
\usepackage{pifont}
\usepackage{hyperref}
\usepackage{multirow}
\numberwithin{equation}{section}
\def\3bar{{|\hspace{-.02in}|\hspace{-.02in}|}}
\def\E{{\mathcal{E}}}

\def\W{{\mathcal{W}}}
\def\V{{\mathcal{V}}}
\def\B{{\mathcal{B}}}
\def\btau{\boldsymbol{\tau}}

\def\bv{{\mathbf{v}}}

\def\blambda{{\boldsymbol{\lambda}}}
\def\jump#1{{[\![#1]\!]}}
\def\similarity#1{{\langle\!\langle#1\rangle\!\rangle}}

\newtheorem{remark}{Remark}[section]
\newtheorem{algorithm}{Weak Galerkin (WG) Algorithm}
\newtheorem{halgorithm}{Hybridized Weak Galerkin (HWG) Algorithm}
\newtheorem{variable-reduction}{Variable Reduction Algorithm}
\setlength{\parindent}{0.25in} \setlength{\parskip}{0.08in}

\title{A Hybridized Weak Galerkin Finite Element Method for
the Biharmonic Equation}

\author{
Chunmei Wang\thanks{Nanjing Normal University Taizhou College,
Taizhou 225300; Jiangsu Key Laboratory for NSLSCS, School of
Mathematical Sciences, Nanjing Normal University, Nanjing 210023,
China.} \and Junping Wang\thanks{Division of Mathematical Sciences,
National Science Foundation, Arlington, VA 22230 (jwang@nsf.gov).
The research of Wang was supported by the National Science
Foundation IR/D program, while working at the Foundation. However,
any opinion, finding, and conclusions or recommendations expressed
in this material are those of the author and do not necessarily
reflect the views of the National Science Foundation.}}

\begin{document}

\maketitle

\begin{abstract}
This paper presents a hybridized formulation for the weak Galerkin
finite element method for the biharmonic equation. The hybridized
weak Galerkin scheme is based on the use of a Lagrange multiplier
defined on the element boundaries. The Lagrange multiplier is
verified to provide a numerical approximation for certain
derivatives of the exact solution. An optimal order error estimate
is established for the numerical approximations arising from the
hybridized weak Galerkin finite element method. The paper also
derives a computational algorithm (Schur complement) by eliminating
all the unknown variables on each element, yielding a significantly
reduced system of linear equations for unknowns on the boundary of
each element.
\end{abstract}

\begin{keywords} weak Galerkin (WG), hybridized weak Galerkin (HWG), finite element
method (FEM), weak Hessian, biharmonic equation.
\end{keywords}

\begin{AMS}
Primary, 65N30, 65N15, 65N12, 74N20; Secondary, 35B45, 35J50,
35J35
\end{AMS}

\pagestyle{myheadings}

\section{Introduction}
In this paper, we are concerned with new developments of weak
Galerkin finite element methods for partial differential equations.
In particular, we shall employ the usual hybridization technique
\cite{fv1965,ab1985,cgl2009} to the weak Galerkin finite element
method for the biharmonic equations proposed and analyzed in
\cite{ww}.

For simplicity, we consider the following biharmonic equation with
Dirichlet and Neumann boundary conditions:
 \begin{equation}\label{0.1}
 \begin{split}
 \Delta^2u&=f, \quad\text{in}\ \Omega,\\
 u&=\xi, \quad\text{on}\ \partial\Omega,\\
 \frac{\partial u}{\partial\textbf{n}}&=\nu, \quad
 \text{on}\ \partial\Omega,\\
 \end{split}
 \end{equation}
where $\Omega$ is an open bounded domain in the Euclidean space
$\mathbb{R}^d\ (d=2, 3)$ with Lipschitz continuous boundary
$\partial\Omega$.

The weak Galerkin method is a finite element technique that
approximates differential operators (e.g., gradient, divergence,
curl, Laplacian, Hessian, etc) as distributions. The method has been
successfully applied to several classes of partial differential
equations, such as the second order elliptic equation
\cite{wy2013,wy3655,wy}, the Stokes equation \cite{wang-ye-stokes},
the Maxwell's equations \cite{mwyz-maxwell}, and the biharmonic
equation \cite{mwy, ww}. For example, in \cite{ww}, a weak Galerkin
finite element method was developed for the biharmonic equation
(\ref{0.1}) by using polynomials of degree $P_k/P_{k-2}/P_{k-2}$ for
any $k\ge 2$, where $P_k$ was used to approximate the function $u$
on each element and $P_{k-2}$ was employed to approximate the trace
of $u$ and $\nabla u$ on the element boundary. The objective of this
paper is to exploit the use of hybridization techniques in weak
Galerkin methods that shall further relax the connection of the
finite element functions among elements.

Hybridization is a useful technique in finite element methods. The
key to hybridization is to identify a Lagrange multiplier which can
be used to relax certain constrains (e.g., continuity) imposed on
the finite element function across element boundaries. Hybridization
has been employed in mixed finite element methods to yield
hybridized mixed finite element formulations suitable for efficient
implementation in practical computation
\cite{ab1985,b1973,b1974,bdm1985,fv1965,rt1977,w}. The idea of
hybridization was also used in discontinuous Galerkin methods
\cite{abcm2002} for deriving hybridized discontinuous Galerkin (HDG)
finite element methods \cite{cgl2009}.

We shall show in this paper that hybridization is a natural approach
for weak Galerkin finite element methods. For illustrative purpose,
we demonstrate how hybridization can be accomplished for the weak
Galerkin finite element scheme of \cite{ww}. We shall also establish
a theoretical foundation to address critical issues such as
stability and convergence for the hybridized weak Galerkin (HWG)
finite element method. The hybridized weak Galerkin is further used
as a tool to derive a Schur complement problem for variables defined
on element boundaries. Therefore, the Schur complement involves the
solution of a linear system with significantly less number of
unknowns than the original WG or HWG formulation. We believe the
hybridization technique is widely applicable in weak Galerkin family
for various partial differential equations, and would like to
encourage interested readers to conduct some independent study along
this direction.

The paper is organized as follows. In Section \ref{Section:Hessian},
we introduce a weak Hessian and a discrete weak Hessian by using
polynomial approximations. In Section \ref{Section:HWG}, we present
a HWG finite element algorithm for the biharmonic problem
(\ref{0.1}). In Section \ref{Section:Stability}, we verify all the
stability conditions in Brezzi's theorem \cite{b1973} for the HWG
scheme. In Section \ref{Section:Error-Eqns}, we derive an error
equation for the HWG approximation. In Section
\ref{Section:Error-Estimates}, we establish an optimal-order error
estimate for the numerical approximation. Finally in Section
\ref{Section:Variable-Reduction}, we present a Schur complement by
eliminating all the variables on the element, yielding a system of
linear equations with significantly reduced number of unknowns
defined on the element boundary.

\section{Weak Hessian and Discrete Weak
Hessian}\label{Section:Hessian} Let $T$ be a polygonal or polyhedral
domain with boundary $\partial T$. A weak function on $T$ is one
given by $v=\{v_0,v_b,\textbf{v}_g\}$ such that $v_0\in L^2(T)$,
$v_b\in L^{2}(\partial T)$ and $\textbf{v}_g\in [L^{2}(\partial
T)]^d$. Let $\W(T)$ be the space of all weak functions on $T$; i.e.,
\begin{equation}\label{2.1}
\W(T)=\{v=\{v_0,v_b,\textbf{v}_g\}: v_0\in L^2(T), v_b\in
L^{2}(\partial T), \textbf{v}_g\in [L^{2}(\partial T)]^d\}.
\end{equation}

Throughout the paper, $C$ appearing in different places denotes
different constant. $(\cdot,\cdot)_{T}$ and
$\langle\cdot,\cdot\rangle_{\partial T}$ denote the usual inner
products in $L^2(T)$ and $L^2(\partial T)$. Denote by
$\|\cdot\|_{m,K}$ the norm in the Sobolev space $H^m(K)$. Let
$|\cdot|_{m,\Omega}$ be the semi-norm of order $m$. For simplicity,
$\|\cdot\|_{m,\Omega}$, $|\cdot|_{m,\Omega}$,
$(\cdot,\cdot)_{\Omega}$ and $\langle\cdot,\cdot\rangle_{\Omega}$
are denoted as $\|\cdot\|_{m}$, $|\cdot|_{m}$, $(\cdot,\cdot)$ and
$\langle\cdot,\cdot\rangle$, respectively. $\|\cdot\|_{0,T}$,
$\|\cdot\|_{0,\partial T}$ and $|\cdot|_{0,\partial \Omega}$ are
simply denoted by $\|\cdot\|_{T}$, $\|\cdot\|_{\partial T}$ and
$\|\cdot\|_{\partial \Omega}$, respectively.

For classical functions, the Hessian is a square matrix of second
order partial derivatives if they all exist. If $f(x_1, \cdots,
x_d)$ stands for the function, then the Hessian of $f$ is
$$
H(f) = \left\{\partial_{ij}^2 f \right\}_{d\times d},
$$
where $\partial_{ij}^2$ is the second order partial derivative along
the directions $x_i$ and $x_j$. The goal of this section is to
introduce weak Hessian for weak functions defined on $T$.

For any $v\in \W(T)$, the weak partial derivative $\partial^2_{ij}$
of $v$ is defined as a linear functional $\partial^2_{ij,w} v$ in
the dual space of $H^2(T)$ such that
 \begin{equation}\label{2.3}
 (\partial^2_{ij,w}v,\varphi)_T=(v_0,\partial^2_{ji}\varphi)_T-
 \langle v_b n_i,\partial_j\varphi\rangle_{\partial T}+
 \langle v_{gi},\varphi n_j\rangle_{\partial T}
 \end{equation}
for all $\varphi\in H^2(T)$. Here $\textbf{n}=(n_1,\cdots,n_d)$ is
the outward normal direction of $T$ on its boundary. The weak
Hessian is then defined as
$$
H_{w,T}(v) = \left\{\partial_{ij,w}^2 v \right\}_{d\times d},\qquad
v\in \W(T).
$$

A discrete version of $\partial^2_{ij,w}$ is an approximation,
denoted by $\partial^2_{ij,w,r,T}$, in the space of polynomials of
degree $r$ such that
  \begin{equation}\label{2.4}
 (\partial^2_{ij,w,r,T}v,\varphi)_T=(v_0,\partial^2
 _{ji}\varphi)_T-\langle v_b n_i,\partial_j\varphi\rangle_{\partial T}
 +\langle v_{gi},\varphi n_j\rangle_{\partial T},\quad \forall \varphi \in
 P_r(T).
 \end{equation}
Analogously, the discrete Hessian is given by
$$
H_{w,r,T}(v) = \left\{\partial_{ij,w,r,T}^2 v \right\}_{d\times
d},\qquad v\in \W(T).
$$

\begin{remark}
Let $v=\{v_0,v_b,\textbf{v}_g\}\in \W(T)$ be a weak function on $T$
such that $v_0$ is twice differentiable on $T$. By applying the
usual integration by parts to the first term on the right-hand side
of (\ref{2.4}), we obtain
\begin{equation}\label{2.4new}
 (\partial^2_{ij,w,r,T}v,\varphi)_T=(\partial^2_{ij}v_0,\varphi)_T
+\langle (v_0-v_b)n_i,\partial_j\varphi \rangle_{\partial T}
 -\langle (\partial_i v_0 - v_{gi})n_j,\varphi\rangle_{\partial T}
 \end{equation}
 for all $\varphi \in P_r(T)$.
\end{remark}

\section{A Hybridized Weak Galerkin Formulation}\label{Section:HWG}
The goal of this section is to introduce a hybridized formulation
for the weak Galerkin finite element algorithm that was first
designed in \cite{ww}.

\subsection{Notations}
Let ${\cal T}_h$ be a partition of the domain $\Omega$ into polygons
in 2D or polyhedra in 3D. Denote by ${\mathcal E}_h$ the set of all
edges or flat faces in ${\cal T}_h$ and  ${\mathcal E}_h^0={\mathcal
E}_h \setminus
\partial\Omega$ the set of all interior edges or flat faces. Assume
that ${\cal T}_h$ is shape regular as described in \cite{wy}. Denote
by $h_T$ the diameter of $T\in {\cal T}_h$ and $h=\max_{T\in {\cal
T}_h}h_T$ the meshsize for the partition ${\cal T}_h$.

For each element $T\in {\cal T}_h$, the trace of $\W(T)$ on the
boudary $\partial T$ is the usual Sobolev space $L^2(\partial
T)\times [L^2(\partial T)]^d$. Define the spaces $\W$ and $\Lambda$
by
\begin{equation}\label{3.1}
\W=\prod_{T\in{\cal T}_h}\W(T),\qquad \Lambda=\prod_{T\in{\cal T}_h}
L^2(\partial T)\times [L^2(\partial T)]^d.
\end{equation}
It should be pointed out that the values of functions in the space
$\W$ are not correlated between any two adjacent elements $T_1$ and
$T_2$ which share $e\subset {\mathcal E}_h^0$ as a common edge or
flat face. For example, on each interior edge $e\subset {\mathcal
E}_h^0$, $v\in\W$ has two copies of $v_b$; one taken from the left
(say $T_1$) and the other from the right (say $T_2$). Similarly, the
vector component $\bv_g$ has two values: left from $T_1$ and right
from $T_2$. Define the jump of $v\in \W$ on $e\subset {\mathcal
E}_h$ by
\begin{equation}\label{Eq:jump}
\begin{split}
\jump{v}_e=\left\{\begin{array}{ll}
                 \{v_b,\textbf{v}_g\}|_{\partial T_1}-\{v_b,\textbf{v}_g\}|_{\partial T_2}, &
                 e\in {\mathcal E}_h^0, \\
                 \{v_b,\textbf{v}_g\}, & e\subset \partial\Omega,
               \end{array}\right.
\end{split}
\end{equation}
where $\{v_b,\textbf{v}_g\}|_{\partial  T_i}$ denotes the value of
$\{v_b,\textbf{v}_g\}$ on $e$ as seen from the element $T_i,\
i=1,2$. The order of $T_1$  and $T_2$ is non-essential in
(\ref{Eq:jump}) as long as the difference is taken in a consistent
way in all the formulas. We shall also use the notation
$\{v_b,\textbf{v}_g\}_{L}$ for $\{v_b,\textbf{v}_g\}|_{\partial
T_1}$ and $\{v_b,\textbf{v}_g\}_{R}$ for
$\{v_b,\textbf{v}_g\}|_{\partial T_2}$ in the rest of the paper.

For any function $\lambda \in \Lambda$, define its similarity on
$e\subset {\mathcal E}_h$ by
\begin{equation}\label{Eq:similarity}
\begin{split}
\similarity{\lambda}_e=\left\{\begin{array}{ll}
\{\lambda_b,\boldsymbol{\lambda}_g\}_{L}+\{\lambda_b,\boldsymbol{\lambda}_g\}_{R},
&
e\in{\mathcal E}_h^0, \\
\{\lambda_b,\boldsymbol{\lambda}_g\}, & e\subset \partial\Omega.
\end{array}\right.
\end{split}
\end{equation}
Denote by $\similarity{\lambda}$ the similarity of $\lambda$ in
${\mathcal E}_h$.

For any given integer $k\geq 2$, denote by $\W_k(T)$ the discrete
weak function space given by
\begin{equation*}
\W_k(T)=\big\{\{v_0,v_b,\textbf{v}_g\}: v_0\in P_k(T), v_b\in
P_{k-2}(e),\textbf{v}_g\in [P_{k-2}(e)]^d, e\subset \partial
T\big\}.
\end{equation*}
Denote by $\Lambda_k(\partial T)$ the trace of $\W_k(T)$ on the
boundary $\partial T$; i.e.,
\begin{equation}
\Lambda_k(\partial
T)=\{\lambda=\{\lambda_b,\boldsymbol{\lambda}_g\}:\lambda_b|_e \in
P_{k-2}(e), \boldsymbol{\lambda}_g|_e\in [P_{k-2}(e)]^d,e\subset
\partial T\}.
\end{equation}
By patching $\W_k(T)$ and $\Lambda_k(\partial T)$ over all the
elements $T\in {\cal T}_h$, we obtain two weak Galerkin finite
element spaces $\W_h$ and $\Lambda_h$ as follows
\begin{equation}
\W_h=\prod_{T\in{\cal T}_h}\W_k(T),\qquad \Lambda_h=\prod_{T\in{\cal
T}_h} \Lambda_k(\partial T).
\end{equation}

Denote by $\W_h^0$ the subspace of $\W_h$ consisting of functions
with vanishing boundary values
$$
\W_h^0=\{v\in \W_h:\ \ v_b|_e=0,\textbf{v}_g|_e=\textbf{0},
e\subset\partial\Omega\}.
$$
Furthermore, let $\V_h$ be the subspace of $\W_h$ consisting of
functions which are continuous across each interior edge or flat
face
$$
\V_h=\big\{v\in \W_h:\ \ \jump{v}_e=\{0,\textbf{0}\},  e\in
{\mathcal E}_h^0\big\}.
$$
Denote by $\V_h^0$ a subspace of $\V_h$ consisting of functions with
vanishing boundary values
$$
\V_h^0=\{v\in \V_h: \ \ v_b|_e=0,\textbf{v}_g|_e=\textbf{0},
e\subset\partial\Omega\}.
$$

Let $\Xi_h$ be the subspace of $\Lambda_h$ consisting of functions
with similarity zero across each edge or flat face; i.e.,
\begin{equation*}
\begin{split}
\Xi_h =\Big\{\lambda\in \Lambda_h:
\similarity{\lambda}_e=\{0,\textbf{0}\}, e\in {\mathcal E}_h\Big\}.
  \end{split}
\end{equation*}
The functions in the space $\Xi_h$ serve as Lagrange multipliers in
hybridization methods.

Denote by $H_{w,k-2}$ the discrete weak Hessian in the finite
element space $\V_h$, which is computed by using (\ref{2.4}) on each
element $T$ by
$$
(\partial^2_{ij, w,k-2} v)|_T=\partial^2_{ij,w,k-2,T}(v|_T), \qquad
v\in \V_h.
$$
For simplicity of notation, we shall drop the subscript $k-2$ from
the notation $\partial^2_{ij, w,k-2}$ and $H_{w,k-2}$ in the rest of
the paper. We also introduce the following notation
$$
(\partial^2_{w}u,\partial^2_{w}v)_h=\sum_{T\in{\cal
T}_h}\sum_{i,j=1}^d (\partial^2_{ij,w}u,\partial^2_{ij,w}v)_T,\quad
\forall u, v\in \V_h.
$$

On each element $T$, denote by $Q_0$ the $L^2$ projection onto
$P_k(T)$. Similarly, for each edge or face $e\subset\partial T$,
denote by $Q_b$ the $L^2$ projection onto $P_{k-2}(e)$ or
$[P_{k-2}(e)]^d$, as appropriate. For any $q\in H^2(\Omega)$, define
a projection $Q_h q$ onto the weak finite element space $\V_h$ such
that on each element $T$
$$
Q_h q=\{Q_0 q,Q_b q,Q_b(\nabla q)\}.
$$

\subsection{Algorithm}
For any $w=\{w_0,w_b,\textbf{w}_g\}\in \W_k(T)$ and
$v=\{v_0,v_b,\textbf{v}_g\}\in \W_k(T)$ and $\lambda\in
\Lambda_k(\partial T)$, set
\begin{align*}
a_T(w,v)=&\sum_{i,j=1}^d(\partial^2_{ij,w}w,\partial^2_{ij,w}v)_T,
\\
s_T(w,v)=& h_T^{-1}\langle Q_b(\nabla w_0)
-\textbf{w}_g, Q_b(\nabla v_0)-\textbf{v}_g\rangle_{\partial T} \\
&+h_T^{-3}\langle Q_bw_0-w_b,Q_bv_0-v_b\rangle_{\partial T},
\\
b_T(v,\lambda)=&\langle v,\lambda\rangle_{\partial T}\\
=&\langle v_b,\lambda_b\rangle_{\partial T}+\langle
\textbf{v}_g,\boldsymbol{\lambda}_g\rangle_{\partial T}.
\end{align*}
Define
$$
a_{s,T}(w,v)=a_T(w,v)+s_T(w,v).
$$
Summing over all the elements $T\in {\cal T}_h$ yields four bilinear
forms
\begin{align*}
a(w,v)&=\sum_{T\in {\cal T}_h}a_T(w,v),\qquad w, v\in \W_h,
\\
s(w,v)&=\sum_{T\in {\cal T}_h}s_T(w,v),\qquad w, v\in \W_h,
\\
b(v,\lambda)&=\sum_{T\in {\cal T}_h}b_T(v,\lambda),\qquad
 v\in \W_h, \lambda\in \Lambda_h,
\\
a_s(w,v)&=\sum_{T\in {\cal T}_h}a_{s,T}(w,v),\qquad w, v\in \W_h.
\end{align*}
Since $\lambda\in\Xi_h$ implies $\lambda_L+\lambda_R=0$ on each
interior edge and $\lambda=0$ on the boundary edge, then for any
$v\in\W_h$ and $\lambda\in\Xi_h$, we have
\begin{equation}\label{EQ:b-form}
b(v,\lambda) = \sum_{e\in\E_h^0} \langle \jump{v}_e,
\lambda_L\rangle_e.
\end{equation}

The following weak Galerkin finite element scheme for the biharmonic
equation (\ref{0.1}) was introduced and analyzed in \cite{ww}.

\begin{algorithm}\label{algo1} Find $\bar{u}_h=\{\bar{u}_0,\bar{u}_b,
\bar{\textbf{u}}_g\}\in \V_h$ such that $\bar{u}_b=Q_b\xi$,
$\bar{\textbf{u}}_g\cdot \textbf{n}=Q_{b}\nu$,
$\bar{\textbf{u}}_g\cdot
\boldsymbol{\tau}=Q_{b}(\nabla\xi\cdot\btau)$ on $\partial\Omega$
and satisfying
\begin{equation}\label{algori1}
a_s(\bar{u}_h,v)=(f,v_0), \quad \forall
v=\{v_0,v_b,\textbf{v}_g\}\in \V_h^0,
\end{equation}
where $\boldsymbol{\tau}\in \mathbb{R}^d$ is the tangential
direction to the edges or faces on the boundary of $\Omega$.
\end{algorithm}

Next, we proposed a hybridized formulation for (\ref{algori1}) by
using a Lagrange multiplier.

\begin{halgorithm} \label{algo2}
Find $(u_h;\lambda_h)\in \W_h \times \Xi_h$ such that
${u}_b=Q_b\xi$, ${\textbf{u}}_g\cdot \textbf{n}=Q_{b}\nu$,
${\textbf{u}}_g\cdot \boldsymbol{\tau}=Q_{b}(\nabla\xi\cdot\btau)$
on $\partial\Omega$ and satisfying the following equations
\begin{eqnarray}\label{2.7}
a_s(u_h,v)-b(v, \lambda_h)&=&(f,v_0), \quad\forall v
=\{v_0,v_b,\textbf{v}_g\}\in \W_h^0,\\
b(u_h,\rho)&=&0,\quad\qquad\forall \rho \in\Xi_h.\label{2.7q2}
\end{eqnarray}
\end{halgorithm}

\subsection{The Relation between WG and HWG}

The HWG scheme (\ref{2.7})-(\ref{2.7q2}) is in fact equivalent to
the WG scheme (\ref{algori1}) in that the solution $u_h$ from
(\ref{2.7})-(\ref{2.7q2}) and $\overline{u}_h$ from (\ref{algori1})
are identical. But the HWG scheme (\ref{2.7})-(\ref{2.7q2}) is
expected to be advantageous over WG for biharmonic interface
problems.

For any $v\in\V_h^0$, let
\begin{equation}\label{lemnorm}
\3bar v\3bar =a^{\frac{1}{2}}_s(v,v).
\end{equation}
It has been verified in \cite{ww} that (\ref{lemnorm}) defines a
norm in the linear space $\V_{h}^0$.

\begin{theorem}\label{lem3.1}
Let $u_h\in \W_h$ be the first component of the solution of the
hybridized WG algorithm (\ref{2.7})-(\ref{2.7q2}). Then, we have
$\jump{u_h}_e=0$ on each interior edge or flat face $e\in {\mathcal
E}_h^0$; i.e., $u_h\in \V_h$. Furthermore, we have $u_b=Q_b\xi$,
$\textbf{u}_g\cdot \textbf{n}=Q_{b}\nu$, $\textbf{u}_g\cdot
\boldsymbol{\tau}=Q_{b}(\nabla\xi\cdot\btau)$ on $\partial\Omega$
and $u_h$ satisfies the equation (\ref{algori1}). Thus, one has $u_h
= \bar u_h$.
\end{theorem}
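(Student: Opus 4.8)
The plan is to exploit the second equation (\ref{2.7q2}) to recover the continuity of $u_h$, and then to show that the first equation (\ref{2.7}), when restricted to test functions in $\V_h^0 \subset \W_h^0$, reduces exactly to the WG equation (\ref{algori1}). The uniqueness of the WG solution $\bar u_h$ (guaranteed by the fact that $\3bar\cdot\3bar$ is a norm on $\V_h^0$) then forces $u_h = \bar u_h$.

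First I would establish that $u_h \in \V_h$. The key is to unpack the constraint (\ref{2.7q2}): $b(u_h,\rho)=0$ for all $\rho\in\Xi_h$. Using the identity (\ref{EQ:b-form}), this reads $\sum_{e\in\E_h^0}\langle\jump{u_h}_e,\rho_L\rangle_e = 0$ for all $\rho\in\Xi_h$. Since $\rho_L$ on each interior edge ranges freely over $P_{k-2}(e)\times[P_{k-2}(e)]^d$ as $\rho$ varies in $\Xi_h$ (the similarity-zero condition only couples $\rho_R = -\rho_L$, leaving $\rho_L$ unconstrained), and since $\jump{u_h}_e$ itself lives in exactly this same polynomial space, I can choose $\rho_L = \jump{u_h}_e$ edge by edge to conclude $\jump{u_h}_e = 0$ on every $e\in\E_h^0$. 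This is precisely the defining condition of $\V_h$, so $u_h\in\V_h$. The boundary conditions $u_b = Q_b\xi$ and the normal/tangential conditions on $\textbf{u}_g$ are imposed directly as constraints in the HWG algorithm, so they transfer to $u_h$ without additional work.

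Next I would verify that $u_h$ satisfies (\ref{algori1}). Take any $v\in\V_h^0$. Since $\V_h^0\subset\W_h^0$, I may use $v$ as a test function in (\ref{2.7}), giving $a_s(u_h,v) - b(v,\lambda_h) = (f,v_0)$. The crucial observation is that $b(v,\lambda_h)=0$ for this $v$: because $v\in\V_h^0$ has $\jump{v}_e = \{0,\textbf{0}\}$ on every interior edge and vanishing boundary values, the representation (\ref{EQ:b-form}) gives $b(v,\lambda_h) = \sum_{e\in\E_h^0}\langle\jump{v}_e,(\lambda_h)_L\rangle_e = 0$, with the boundary contributions vanishing as well. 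Hence $a_s(u_h,v) = (f,v_0)$ for all $v\in\V_h^0$, which is exactly (\ref{algori1}).

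Finally, since $u_h\in\V_h$ satisfies both the boundary constraints and the variational equation (\ref{algori1}) that uniquely determine $\bar u_h$, I conclude $u_h = \bar u_h$. I do not expect any single step to present a serious obstacle; the main point requiring care is the surjectivity argument in the first step — namely that as $\rho$ ranges over $\Xi_h$, the trace $\rho_L$ on a fixed interior edge genuinely sweeps out the full space $P_{k-2}(e)\times[P_{k-2}(e)]^d$, so that testing against $\jump{u_h}_e$ is admissible. This hinges on the fact that the similarity-zero condition $\similarity{\rho}_e = \{0,\textbf{0}\}$ imposes no constraint on $\rho_L$ beyond pinning $\rho_R$, which I would state explicitly to make the choice $\rho_L = \jump{u_h}_e$ legitimate.
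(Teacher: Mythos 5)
Your proposal is correct and follows essentially the same route as the paper: the paper also tests (\ref{2.7q2}) with $\rho$ equal to $\jump{u_h}_e$ as seen from $T_1$ and $-\jump{u_h}_e$ as seen from $T_2$ (your ``$\rho_L$ sweeps the full space'' observation made concrete), then restricts (\ref{2.7}) to $v\in\V_h^0$ where $b(v,\lambda_h)=0$, and invokes uniqueness of the WG solution. No gaps; your explicit remark that $\jump{u_h}_e$ lies in $P_{k-2}(e)\times[P_{k-2}(e)]^d$ is a small but welcome clarification of the same argument.
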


\begin{proof} Let $e$ be an interior edge or flat face
shared by two elements $T_1$ and $T_2$. By letting
$\rho=\jump{u_h}_e$ on $e$ as seen from  $T_1$ (i.e.,
$\rho=-\jump{u_h}_e$ on $e$ as seen from $T_2$) and $\rho=0$
otherwise in (\ref{2.7q2}), we obtain from (\ref{EQ:b-form}) that
$$
0=b(u_h,\rho)=\sum_{T\in {\cal T}_h}\langle
u_h,\rho\rangle_{\partial T}=\int_e\jump{u_h}_e^2ds,
$$
which implies that $\jump{u_h}_e=0$ for each interior edge or flat
face $e\in {\mathcal E}_h^0$.

Now by restricting $v\in \V_h^0$ in the equation (\ref{2.7}) and
using the fact that $b(v, \lambda_h)=0$, we arrive at
$$
a_s(u_h,v)=(f,v_0)\qquad \forall v\in \V_h^0,
$$
which is the same as (\ref{algori1}). It follows from the solution
uniqueness for (\ref{algori1}) that $u_h\equiv \bar u_h$. This
completes the proof.
\end{proof}

\section{Stability Conditions for HWG}\label{Section:Stability}

It is easy to see that the following defines a norm in the finite
element space $\Xi_h$
\begin{equation}\label{xinorm}
\|\lambda_h\|_{\Xi_h}=\Big(\sum_{e\in {\mathcal E}_h^0}
 h_e^{3}\|\lambda_b
 \|^2_e+h_e \|\boldsymbol{\lambda}_g\|^2_e\Big)^{\frac{1}{2}}.
 \end{equation}
As to $\W_h^0$, for any $v=\{v_0,v_b,\textbf{v}_g\}\in \W_h^0$, let
\begin{equation}\label{wh0norm}
\|v\|_{\W_h^0} =\Big(\3bar v\3bar  ^2+\sum_{e\in {\mathcal
E}_h^0}h_e^{-3}\|\jump{v_b}_e\|_e^2+h_e^{-1}\|\jump{\textbf{v}_g}_e
\|_e^2\Big)^{\frac{1}{2}}.
\end{equation}
We claim that $\|\cdot\|_{\W_h^0}$ defines a norm in $\W_h^0$. In
fact, if $\|v\|_{\W_h^0}=0$, then $\jump{v_b}_e=0$ and
$\jump{\textbf{v}_g}_e=\textbf{0}$ on each interior edge or flat
face $e\in {\mathcal E}_h^0$, and hence $v\in \V_h^0$. Since
$\3bar\cdot\3bar$ defines a norm in the linear space $\V_h^0$, then
$v=0$. This verifies the positivity property of
$\|\cdot\|_{\W_h^0}$. The other properties for a norm can be checked
trivially.

\begin{lemma}{\rm(\cite{wy})}\label{Lemma:trace inequality}
~\emph{\rm (}Trace Inequality{\rm)} Let $\mathcal{T}_h$ be a
partition of the domain $\Omega$ into polygons in 2D or polyhedra in
3D. Assume that the partition $\mathcal{T}_h$ satisfies the
assumptions (P1), (P2), and (P3) as specified in \cite{wy}. Let
$p>1$ be any real number. Then, there exists a constant $C$ such
that for any $T\in \mathcal{T}_h$ and edge/face $e\in\partial T$, we
have
\begin{eqnarray}\label{eq:trace inequality}\|\theta\|^p_{L^p(e)}\leq
Ch_T^{-1}(\|\theta\|^p_{L^p(T)}+h^p_T\|\nabla\theta\|^p_{L^p(T)}),
\end{eqnarray}
where $\theta\in W^{1,p}(T)$ is any function.
\end{lemma}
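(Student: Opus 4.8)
The plan is to reduce the estimate on a general polytope $T$ to a standard trace inequality on a simplex, and to prove the latter by the divergence theorem applied to a radial vector field. First I would invoke the shape-regularity assumptions (P1)--(P3) to attach to the face $e\subset\partial T$ a sub-simplex $T_e=\mathrm{conv}(A,e)$, where $A$ is the center of a ball inscribed in $T$ of radius comparable to $h_T$. These assumptions are precisely what guarantee that $T_e\subset T$, that $\mathrm{diam}(T_e)\le C h_T$, and --- most importantly --- that the height $H_e$ of $T_e$ over the base $e$ (the distance from $A$ to the hyperplane containing $e$) satisfies $H_e\ge c\,h_T$ with $c>0$ depending only on the regularity constants. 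Once the estimate is established on $T_e$, the inclusion $T_e\subset T$ together with $h_{T_e}\approx h_T$ will immediately upgrade the right-hand side from $T_e$ to $T$.

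For the simplex estimate I would argue first for $\theta\in C^1(\overline{T_e})$ and then pass to general $\theta\in W^{1,p}(T)$ by density (here $p>1$ makes $|\theta|^p\in C^1$ and renders the density step routine). Let $\mathbf{F}(\mathbf{x})=\mathbf{x}-A$, so that $\nabla\cdot\mathbf{F}=d$ and $|\mathbf{F}|\le C h_T$ on $T_e$. The decisive geometric observation is that every lateral face of the simplex $T_e$ passes through the apex $A$, so on each such face $\mathbf{F}\cdot\mathbf{n}=(\mathbf{x}-A)\cdot\mathbf{n}=0$, while on the base $e$ one has the constant value $\mathbf{F}\cdot\mathbf{n}_e=H_e$. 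Applying the divergence theorem to $|\theta|^p\mathbf{F}$ therefore collapses the boundary term to the base:
\[
H_e\,\|\theta\|_{L^p(e)}^p
=\int_{T_e}\Big(d\,|\theta|^p+p\,|\theta|^{p-1}\mathrm{sgn}(\theta)\,\nabla\theta\cdot\mathbf{F}\Big)\,d\mathbf{x}.
\]

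Bounding the right-hand side is then routine. I would estimate $|\mathbf{F}|\le Ch_T$, apply H\"older's inequality with exponents $p/(p-1)$ and $p$ to the cross term $\int_{T_e}|\theta|^{p-1}|\nabla\theta|\,d\mathbf{x}\le \|\theta\|_{L^p(T_e)}^{p-1}\|\nabla\theta\|_{L^p(T_e)}$, and finally use Young's inequality to absorb the product into a sum $\varepsilon\|\theta\|_{L^p(T_e)}^p+C_\varepsilon h_T^p\|\nabla\theta\|_{L^p(T_e)}^p$. This yields $H_e\|\theta\|_{L^p(e)}^p\le C(\|\theta\|_{L^p(T_e)}^p+h_T^p\|\nabla\theta\|_{L^p(T_e)}^p)$; dividing by $H_e\ge c\,h_T$ and enlarging $T_e$ to $T$ gives (\ref{eq:trace inequality}).

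The main obstacle is not any of the integral manipulations but the geometric input: one must extract from (P1)--(P3) a sub-simplex $T_e\subset T$ whose height over $e$ and whose diameter are both uniformly comparable to $h_T$, so that the factors $H_e^{-1}$ and $|\mathbf{F}|$ are controlled by constants independent of $T$ and $h$. On a genuine polytope with possibly many small faces this is exactly the content of the shape-regularity hypotheses, and all $h$-independence of the constant $C$ rests on this construction. The remaining care is the density reduction to $C^1$ functions, which is where the restriction $p>1$ is used.
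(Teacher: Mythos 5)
The paper does not prove this lemma at all: it is quoted verbatim from the reference \cite{wy}, with the shape-regularity hypotheses (P1)--(P3) likewise deferred to that paper. So there is no in-paper argument to compare against; what you have written is a self-contained proof, and it is correct. Your route --- attach to each face $e$ a cone $T_e=\mathrm{conv}(A,e)$ over the center $A$ of an inscribed ball of radius $\gtrsim h_T$, apply the divergence theorem to $|\theta|^p(\mathbf{x}-A)$ so that the lateral faces (all containing $A$) contribute nothing and the base contributes $H_e\|\theta\|^p_{L^p(e)}$, then H\"older and Young --- is essentially the standard argument for trace inequalities on shape-regular polytopal elements, and it is the kind of argument the cited reference relies on. Two cosmetic remarks: in 3D with a polygonal face $e$ the set $T_e$ is a pyramid rather than a simplex, but the only property you use (that every lateral face passes through $A$, hence $(\mathbf{x}-A)\cdot\mathbf{n}=0$ there) still holds; and the lower bound $H_e\ge c\,h_T$ follows from star-shapedness of $T$ with respect to the inscribed ball, since for $x\in e$ every segment from $x$ to the ball must enter $T$, forcing $(x-A)\cdot\mathbf{n}\ge\rho h_T$. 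Your identification of the geometric input as the only nontrivial point, and of $p>1$ as what makes $t\mapsto|t|^p$ a $C^1$ function for the density step, is accurate.
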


This paper will make use of the trace inequality (\ref{eq:trace
inequality}) with $p=2$:
\begin{eqnarray}\label{eq:trace inequality-p=2}\|\theta\|^2_e\leq
Ch_T^{-1}\|\theta\|^2_{T}+Ch_T\|\nabla\theta\|^2_{T}.
\end{eqnarray}

\begin{lemma}\label{lemboundedness}(boundedness)
There exists a constant $C>0$ such that
\begin{align}\label{bou1}
|a_s(u ,v )|&\leq C \|u \|_{\W_h^0}\|v \|_{\W_h^0}, \quad  \forall
u, v \in \W_h^0,\\
 |b(v ,\lambda )|&\leq C
\|v \|_{\W_h^0}\|\lambda \|_{\Xi_h}, \quad \forall v \in \W_h^0,
\lambda \in \Xi_h.\label{bou2}
\end{align}
\end{lemma}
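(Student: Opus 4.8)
The plan is to establish the two boundedness estimates \eqref{bou1} and \eqref{bou2} separately, in each case reducing the bilinear form to a sum over elements or edges and then applying Cauchy--Schwarz followed by the definitions of the norms $\|\cdot\|_{\W_h^0}$ and $\|\cdot\|_{\Xi_h}$. For the form $a_s=a+s$, I would treat the two pieces individually. The stabilizer $s$ is immediate: by Cauchy--Schwarz on each boundary term of $s_T(u,v)$ and the observation that the two summands of $s_T$ are precisely the quantities controlled inside $\3bar u\3bar^2=a_s(u,u)$, one gets $|s(u,v)|\le \3bar u\3bar\,\3bar v\3bar\le \|u\|_{\W_h^0}\|v\|_{\W_h^0}$. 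The principal form $a_T(u,v)=\sum_{i,j}(\partial^2_{ij,w}u,\partial^2_{ij,w}v)_T$ likewise yields $|a(u,v)|\le a^{1/2}(u,u)a^{1/2}(v,v)\le \3bar u\3bar\,\3bar v\3bar$ by Cauchy--Schwarz in the $(\cdot,\cdot)_T$ inner product and the fact that $a(u,u)\le a_s(u,u)=\3bar u\3bar^2$. Combining these gives \eqref{bou1}, since $\3bar v\3bar\le\|v\|_{\W_h^0}$ by the very definition \eqref{wh0norm}.

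For \eqref{bou2}, I would start from the identity \eqref{EQ:b-form}, which rewrites $b(v,\lambda)$ for $\lambda\in\Xi_h$ as a sum over interior edges $\sum_{e\in\E_h^0}\langle\jump{v}_e,\lambda_L\rangle_e$. Expanding $\langle\jump{v}_e,\lambda_L\rangle_e=\langle\jump{v_b}_e,\lambda_b\rangle_e+\langle\jump{\textbf{v}_g}_e,\boldsymbol{\lambda}_g\rangle_e$ and applying Cauchy--Schwarz on each edge, I would insert matching powers of $h_e$ so that each factor lands in the correct norm: writing $\langle\jump{v_b}_e,\lambda_b\rangle_e\le (h_e^{-3/2}\|\jump{v_b}_e\|_e)(h_e^{3/2}\|\lambda_b\|_e)$ and similarly $\langle\jump{\textbf{v}_g}_e,\boldsymbol{\lambda}_g\rangle_e\le(h_e^{-1/2}\|\jump{\textbf{v}_g}_e\|_e)(h_e^{1/2}\|\boldsymbol{\lambda}_g\|_e)$. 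A final discrete Cauchy--Schwarz over the edge index $e$ then separates the two sums, one of which is exactly the jump portion of $\|v\|_{\W_h^0}^2$ and the other exactly $\|\lambda\|_{\Xi_h}^2$ from \eqref{xinorm}, delivering \eqref{bou2} with no hidden mesh constants.

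The step I expect to require the most care is the treatment of the weak Hessian term in $a_T$. Unlike the stabilizer, $\partial^2_{ij,w}u$ is defined implicitly through \eqref{2.4} and is not manifestly bounded by the norm $\|\cdot\|_{\W_h^0}$; indeed the norm $\3bar\cdot\3bar$ is built from $a_s$ itself, so the bound $a(u,u)\le a_s(u,u)$ hinges on the nonnegativity of $s$, which holds since $s_T(v,v)$ is a sum of squares. The only genuinely delicate point is confirming that this self-referential structure does not conceal an $h$-dependence: because $\|\cdot\|_{\W_h^0}$ is defined so that $\3bar v\3bar$ appears as one of its summands, the inequality $\3bar v\3bar\le\|v\|_{\W_h^0}$ is tautological, and thus no trace inequality or inverse estimate is actually needed for \eqref{bou1}. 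The trace inequality \eqref{eq:trace inequality-p=2} would only enter if one later wished to bound $a_s$ directly in terms of Sobolev norms of a smooth function, which is not required here. I would therefore expect both estimates to follow cleanly, with the verification that every scaling power of $h_e$ matches being the sole bookkeeping obstacle.
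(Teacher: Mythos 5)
Your proposal is correct and takes essentially the same route as the paper: termwise Cauchy--Schwarz on the sums making up $a_s$, with each factor recognized as a piece of $\3bar \cdot\3bar\le\|\cdot\|_{\W_h^0}$, and for $b(v,\lambda)$ the reduction to interior-edge jumps (as in \eqref{EQ:b-form}) followed by a weighted Cauchy--Schwarz with the $h_e^{\pm 3/2}$ and $h_e^{\pm 1/2}$ factors matching \eqref{wh0norm} and \eqref{xinorm}. Your observation that no trace or inverse inequality is needed here is also consistent with the paper's argument.
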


\begin{proof}
To prove (\ref{bou1}), we use the Cauchy-Schwarz inequality to
obtain
\begin{align*}
|a_s(u,v)|=&\Big|\sum_{T\in{\cal
T}_h}\sum_{i,j=1}^d(\partial_{ij,w}^2u,\partial_{ij,w}^2v)_T+h_T^{-1}
\langle Q_b(\nabla u_0)-\textbf{u}_g,Q_b(\nabla
v_0)-\textbf{v}_g\rangle_{\partial T}\\
&+h_T^{-3} \langle Q_bu_0- u_b,Q_b v_0 - v_b \rangle_{\partial
T}\Big|\\
\leq&\Big(\sum_{T\in{\cal
T}_h}\sum_{i,j=1}^d\|\partial_{ij,w}^2u\|^2_T\Big)^\frac{1}{2}
\Big(\sum_{T\in{\cal
T}_h}\sum_{i,j=1}^d\|\partial_{ij,w}^2v\|^2_T\Big) ^\frac{1}{2}\\
&+\Big(\sum_{T\in{\cal T}_h} h_T^{-1} \| Q_b(\nabla
u_0)-\textbf{u}_g\|_{\partial T}^2\Big)
^\frac{1}{2}\Big(\sum_{T\in{\cal T}_h} h_T^{-1} \| Q_b(\nabla
v_0)-\textbf{v}_g\|_{\partial T}^2\Big) ^\frac{1}{2}\\
&+\Big(\sum_{T\in{\cal T}_h} h_T^{-3} \| Q_b u_0 - u_b\|_{\partial
T}^2\Big) ^\frac{1}{2}\Big(\sum_{T\in{\cal T}_h} h_T^{-3} \| Q_b
v_0 - v_b\|_{\partial T}^2\Big) ^\frac{1}{2}\\
\leq& C\|u\|_{\W_h^0}\|v\|_{\W_h^0}.
\end{align*}

As to (\ref{bou2}), it follows from the Cauchy-Schwarz inequality
that
\begin{align*}
 |b(v,\lambda)| =&\Big|\sum_{T\in {\cal T}_h}\langle
v_b,\lambda_b\rangle_{\partial T}+\langle
\textbf{v}_g,\boldsymbol{\lambda}_g\rangle_{\partial T}\Big|\\
=&\Big|\sum_{e\in {\cal E}_h^0}\langle
\jump{v_b},\lambda_b\rangle_{e}+\langle
\jump{\textbf{v}_g},\boldsymbol{\lambda}_g\rangle_{e}\Big|\\
 \leq & \Big(\sum_{e\in {\cal E}_h^0}h_e^{-3}
\|\jump{v_b}\|^2_{e}\Big)^{\frac{1}{2}}\Big(\sum_{e\in {\cal E}_h^0}
h_e^{3}\|\lambda_b\|^2_{e}\Big)^{\frac{1}{2}}\\
&+\Big(\sum_{e\in {\cal E}_h^0} h_e
^{-1}\|\jump{\textbf{v}_g}\|^2_{e}\Big)^{\frac{1}{2}}\Big(\sum_{e\in
{\cal E}_h^0}
h_e\|\boldsymbol{\lambda}_g\|^2_{e}\Big)^{\frac{1}{2}}\\
 \leq & C \|v\|_{\W_h^0}\|\lambda \|_{\Xi_h},
\end{align*}
which ends the proof.
\end{proof}

\begin{lemma}\label{lemcovercivity}(coercivity)
There exists a constant $C>0$, such that
\begin{equation}\label{q}
a_s(v,v)\geq C \|v\|_{\W_h^0}^2,\qquad \forall v\in \V_{h}^0.
\end{equation}
\end{lemma}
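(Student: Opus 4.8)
The plan is to exploit the fact that coercivity is being asserted only on the subspace $\V_h^0$, not on all of $\W_h^0$. The crucial structural observation is that every $v\in\V_h^0$ lies in $\V_h$, so by the very definition of $\V_h$ we have $\jump{v}_e=\{0,\textbf{0}\}$ on each interior edge or flat face $e\in\E_h^0$; equivalently, $\jump{v_b}_e=0$ and $\jump{\textbf{v}_g}_e=\textbf{0}$ there. This is what makes the two spaces behave differently for the purpose of the estimate.

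First I would substitute this continuity into the definition (\ref{wh0norm}) of the $\W_h^0$-norm. The two jump contributions $\sum_{e\in\E_h^0}h_e^{-3}\|\jump{v_b}_e\|_e^2$ and $\sum_{e\in\E_h^0}h_e^{-1}\|\jump{\textbf{v}_g}_e\|_e^2$ both vanish identically for $v\in\V_h^0$, so the norm collapses to $\|v\|_{\W_h^0}^2=\3bar v\3bar^2$. Next I would invoke the definition (\ref{lemnorm}) of the triple-bar norm, namely $\3bar v\3bar=a_s^{1/2}(v,v)$, which is valid precisely for $v\in\V_h^0$. Combining the two identities gives
$$
\|v\|_{\W_h^0}^2=\3bar v\3bar^2=a_s(v,v),\qquad \forall v\in\V_h^0,
$$
so the asserted inequality (\ref{q}) in fact holds as an equality, with constant $C=1$.

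There is essentially no analytical obstacle here: the result is a direct consequence of the definitions. The only point requiring care is to recognize that coercivity is claimed on $\V_h^0$, where continuity annihilates the jump terms, and not on the full space $\W_h^0$, where such an inequality would genuinely fail, since $a_s$ controls only the $\3bar\cdot\3bar$ portion of $\|\cdot\|_{\W_h^0}$ and carries no information about the jumps. The real content of the lemma is thus the statement, already recorded after (\ref{lemnorm}) and relying on \cite{ww}, that $\3bar\cdot\3bar$ is a genuine norm on the linear space $\V_h^0$; coercivity is then merely the reformulation $a_s(v,v)=\3bar v\3bar^2=\|v\|_{\W_h^0}^2$.
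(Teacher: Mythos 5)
Your argument is exactly the paper's proof: for $v\in\V_h^0$ the jump terms in (\ref{wh0norm}) vanish, so $\|v\|_{\W_h^0}=\3bar v\3bar$ and $a_s(v,v)=\3bar v\3bar^2$ gives (\ref{q}) with $C=1$. The proposal is correct and adds only a more explicit justification of why the jumps disappear.
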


\begin{proof} For any $v\in \V_{h}^0$, we have
$\|v\|_{\W_h^0}=\3bar v\3bar.$ Thus, the estimate (\ref{q}) holds
true with $C=1$.
\end{proof}

\begin{lemma}\label{lemmainfsup}(inf-sup condition)
 There exists a constant $C>0$ such that
\begin{equation}\label{a3}
\sup_{v\in \W_h^0}\frac{b(v,\sigma)}{\|v\|_{\W_h^0}}\geq C
\|\sigma\|_{\Xi_h}, \qquad \forall \sigma\in \Xi_h.
\end{equation}
\end{lemma}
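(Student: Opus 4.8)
The plan is to verify the inf-sup condition (\ref{a3}) by the standard device of constructing, for each given $\sigma\in\Xi_h$, an explicit test function $v^*\in\W_h^0$ for which $b(v^*,\sigma)$ is comparable to $\|\sigma\|_{\Xi_h}^2$ while $\|v^*\|_{\W_h^0}$ is controlled by $\|\sigma\|_{\Xi_h}$. The starting point is the representation (\ref{EQ:b-form}): for $\sigma\in\Xi_h$,
\[
b(v,\sigma)=\sum_{e\in\E_h^0}\Big(\langle\jump{v_b}_e,\{\sigma_b\}_L\rangle_e+\langle\jump{\bv_g}_e,\{\boldsymbol{\sigma}_g\}_L\rangle_e\Big),
\]
so $b(v,\sigma)$ sees $v$ only through its interior-edge jumps. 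This dictates the construction of $v^*$: set $v_0^*=0$ on every element; on each interior edge $e=\partial T_1\cap\partial T_2$ assign the left trace $\{v_b^*,\bv_g^*\}_L=\{h_e^3\{\sigma_b\}_L,\,h_e\{\boldsymbol{\sigma}_g\}_L\}$ and the right trace $\{v_b^*,\bv_g^*\}_R=\{0,\mathbf{0}\}$; and set all traces to zero on $\partial\Omega$. Since $\sigma\in\Xi_h$ forces $\similarity{\sigma}_e=\{0,\mathbf 0\}$, in particular $\sigma|_{\partial\Omega}=0$, this choice is consistent with $v^*\in\W_h^0$, and clearly $v^*$ lies in the correct polynomial spaces.

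With this $v^*$ one computes directly that $\jump{v^*}_e=\{h_e^3\{\sigma_b\}_L,\,h_e\{\boldsymbol{\sigma}_g\}_L\}$, whence
\[
b(v^*,\sigma)=\sum_{e\in\E_h^0}\big(h_e^3\|\{\sigma_b\}_L\|_e^2+h_e\|\{\boldsymbol{\sigma}_g\}_L\|_e^2\big)=\|\sigma\|_{\Xi_h}^2 .
\]
The two jump terms in the definition (\ref{wh0norm}) of $\|v^*\|_{\W_h^0}^2$ collapse, by the very same scaling $h_e^{-3}\cdot h_e^6$ and $h_e^{-1}\cdot h_e^2$, to $\|\sigma\|_{\Xi_h}^2$ as well. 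For the stabilization part of $\3bar v^*\3bar^2=a_s(v^*,v^*)$, the vanishing of $v_0^*$ gives $Q_b(\nabla v_0^*)=0$ and $Q_bv_0^*=0$, so $s(v^*,v^*)=\sum_T\big(h_T^{-1}\|\bv_g^*\|_{\partial T}^2+h_T^{-3}\|v_b^*\|_{\partial T}^2\big)$, and using shape regularity $h_T\simeq h_e$ on each $e\subset\partial T$ this is again bounded by $C\|\sigma\|_{\Xi_h}^2$.

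The main obstacle is the remaining term $a(v^*,v^*)=\sum_T\sum_{i,j=1}^d\|\partial^2_{ij,w}v^*\|_T^2$, since the discrete weak Hessian is only defined implicitly through (\ref{2.4}). The idea is to test (\ref{2.4}) with $\varphi=\partial^2_{ij,w}v^*\in P_{k-2}(T)$; because $v_0^*=0$, only the two boundary terms survive, so
\[
\|\partial^2_{ij,w}v^*\|_T^2=-\langle v_b^* n_i,\partial_j\varphi\rangle_{\partial T}+\langle v_{gi}^*,\varphi\, n_j\rangle_{\partial T}.
\]
Applying the Cauchy-Schwarz inequality on $\partial T$ together with the trace inequality (\ref{eq:trace inequality-p=2}) and the polynomial inverse inequalities, which yield $\|\varphi\|_{\partial T}\le C h_T^{-1/2}\|\varphi\|_T$ and $\|\nabla\varphi\|_{\partial T}\le C h_T^{-3/2}\|\varphi\|_T$, one obtains $\|\partial^2_{ij,w}v^*\|_T\le C\big(h_T^{-3/2}\|v_b^*\|_{\partial T}+h_T^{-1/2}\|\bv_g^*\|_{\partial T}\big)$. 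Squaring and summing over $i,j$ and $T$ shows $a(v^*,v^*)\le C\,s(v^*,v^*)\le C\|\sigma\|_{\Xi_h}^2$. Collecting all pieces gives $\|v^*\|_{\W_h^0}\le C\|\sigma\|_{\Xi_h}$, and therefore
\[
\sup_{v\in\W_h^0}\frac{b(v,\sigma)}{\|v\|_{\W_h^0}}\ge\frac{b(v^*,\sigma)}{\|v^*\|_{\W_h^0}}\ge\frac{\|\sigma\|_{\Xi_h}^2}{C\|\sigma\|_{\Xi_h}}=C^{-1}\|\sigma\|_{\Xi_h},
\]
which is the desired estimate (the case $\sigma=0$ being trivial). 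I expect the weak-Hessian bound to be the delicate step, as it is the only place where the implicit definition of $\partial^2_{ij,w}$ and the interplay of trace and inverse estimates must be handled carefully; the rest is bookkeeping with the weights $h_e^3$ and $h_e$ chosen precisely to make the scalings match.
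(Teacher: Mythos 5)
Your proposal is correct and follows essentially the same route as the paper's proof: an explicit test function with $v_0=0$ and boundary traces scaled by $h_e^3$ and $h_e$, the jump and stabilizer terms matching $\|\sigma\|_{\Xi_h}^2$ by construction, and the weak Hessian controlled by testing (\ref{2.4}) with $\varphi=\partial^2_{ij,w}v$ and combining the trace inequality (\ref{eq:trace inequality-p=2}) with inverse estimates. The only (immaterial) difference is that you assign the scaled traces one-sidedly on each interior edge, whereas the paper takes $v=\{0,h_e^3\sigma_b,h_e\boldsymbol{\sigma}_g\}$ on both sides and uses $\sigma^L+\sigma^R=0$, which changes the constants by a factor of two but nothing else.
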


\begin{proof} For any $\sigma\in\Xi_h$, we have
$\similarity{\sigma}_e=0$ or equivalently $\sigma^L+\sigma^R=0$ on
each interior edge $e\in \E_h^0$ and $\sigma=0$ on all boundary
edges. By letting $v=\{0,h_e^3\sigma_b,h_e\boldsymbol{\sigma}_g\}\in
\W_h^0$ in $b(v,\sigma)$ and $s(v,v)$, we obtain
\begin{equation}\label{a5}
\begin{split}
b(v,\sigma)&=\sum_{e\in {\mathcal E}_h^0}\langle
v_b^L,\sigma_b^L\rangle_e+\langle v_b^R,\sigma_b^R\rangle_e+ \langle
\textbf{v}_g^L ,\boldsymbol{\sigma}_g^L\rangle_e+ \langle
\textbf{v}_g^R,\boldsymbol{\sigma}_g^R\rangle_e
\\
&=\sum_{e\in {\mathcal E}_h^0}\langle
v_b^L-v_b^R,\sigma_b^L\rangle_e+ \langle
\textbf{v}_g^L-\textbf{v}_g^R,\boldsymbol{\sigma}_g^L\rangle_e
\\
& =2\sum_{e\in {\mathcal E}_h^0} h_e^3\| \sigma_b\|^2_e+h_e\|
\boldsymbol{\sigma}_g\|^2_e,
\end{split}
\end{equation}
and
\begin{equation}\label{a6}
\begin{split}
s(v,v)=&\sum_{e\in {\mathcal E}_h^0} h_e^{-1}h_e^{2}\|
 \boldsymbol{\sigma}_g^L\|^2_{e}
+h_e^{-3}h_e^{6}\|  \sigma_b^L\|^2_e\\
&+h_e^{-1}h_e^{2}\| \boldsymbol{\sigma}_g^R\|^2_{e}
+h_e^{-3}h_e^{6} \|  \sigma_b^R\|^2_{e}\\
=&2\sum_{e\in {\mathcal E}_h^0} h_e \|
 \boldsymbol{\sigma}_g\|^2_{e}
+h_e^{3}\|  \sigma_b\|^2_e.
\end{split}
\end{equation}
It follows from (\ref{2.4}), Cauchy-Schwarz inequality, the trace
inequality (\ref{eq:trace inequality-p=2}) and the inverse
inequality that
\begin{equation}\label{a7}
\begin{split}
&(\partial_{ij,w}^2v,\partial_{ij,w}^2v)_T \\
=&\sum_{e\subset
\partial T} -\langle v_b^*,\partial_j(\partial_{ij,w}^2v)\cdot
n_i\rangle_e+\langle v_{gi}^*\cdot n_j,
\partial_{ij,w}^2v\rangle_e \\
\leq & \sum_{e\subset
\partial T} h_e^3\|\sigma_b^*\|_e\|\partial_j(\partial_{ij,w}^2v)\|_e+h_e\| \sigma_{gi}^*\|_e
\|\partial_{ij,w}^2v\|_e \\
\leq & C\sum_{e\subset
\partial T}h_e^3 \|\sigma_b^*\|_eh_e^{-3/2}\| \partial_{ij,w}^2v\|_T+h_e\| \sigma_{gi}^*\|_eh_e^{-1/2}
\|\partial_{ij,w}^2v\|_T \\
=& C\sum_{e\subset
\partial T}\| \partial_{ij,w}^2v\|_T\Big(h_e^{\frac{3}{2}}\|\sigma_b^*\|_e+h_e^{\frac{1}{2}}\| \sigma_{gi}^*\|_e
 \Big),
\end{split}
\end{equation}
where $v_b^*$ is chosen to be $v_b^L$ or $v_b^R$ according to the
relative position of $v_b$ and $e$, and the same to $v_{gi}^*$,
$\sigma_{b}^*$, $ \sigma_{gi}^*$, which implies that
\begin{equation}\label{a8}
\| \partial_{ij,w}^2v\|_T \leq C\sum_{e\subset
\partial T} h_e^{\frac{3}{2}}\|\sigma_b^*\|_e+h_e^{\frac{1}{2}}\| \sigma_{gi}^*\|_e.
\end{equation}
Summing over all element $T$ yields
\begin{equation}\label{a9}
\begin{split}
(\partial^2_w v, \partial^2_w v)_h\leq C\sum_{e\in{\mathcal
E}_h^0}\sum_{i=1}^d\Big( h_e^{3}\|\sigma_b^*\|_e^2+h_T\|
\sigma_{gi}^*\|_e^2 \Big).
\end{split}
\end{equation}
It follows from (\ref{a6}) and (\ref{a9}) that
\begin{equation}\label{a10}
\begin{split}
\3barv\3bar^2\leq C\sum_{e\in{\mathcal E}_h^0}
 h_e^{3}\|\sigma_b\|^2_e+h_e \|
\boldsymbol{\sigma}_{g}\|^2_e = C \|\sigma\|^2_{\Xi_h}.
\end{split}
\end{equation}
Recall that $\sigma^L+\sigma^R=0$. Thus,
\begin{equation}\label{aa10}
\begin{split}
h_e^{-3}\|\jump{v_b}_e\|_e^2+h_e^{-1}\|\jump{\textbf{v}_g}_e
\|_e^2=& h_e^{-3}\|v_b^L-v_b^R\|_e^2+h_e^{-1}
\|\textbf{v}_g^L-\textbf{v}_g^R\|_e^2\\
=& h_e^{-3}\|h_e^{3}\sigma_b^L-h_e^{3}\sigma_b^R\|_e^2+h_e^{-1}
\|h_e \boldsymbol{\sigma}_g^L-h_e \boldsymbol{\sigma}_g ^R\|_e^2\\
=&2h_e^{ 3}\| \sigma_b \|_e^2+2h_e \| \boldsymbol{\sigma}_g \|_e^2.
\end{split}
\end{equation}
Combining (\ref{a5}), (\ref{a10}), (\ref{aa10}) and (\ref{wh0norm})
gives
\begin{equation}\label{a11}
\begin{split}
\sup_{v\in \W_h^0}\frac{b(v,\sigma)}{\|v\|_{\W_h^0}}  \geq &
C\frac{\sum_{e\in {\mathcal
E}_h^0}h_e^{3}\|\sigma_b\|_e^2+h_e\|\boldsymbol{\sigma}_g\|_e^2 } {(
\sum_{e\in {\mathcal E}_h^0}
 h_e^{3}\|\sigma_b\|^2_e+h_e \|
\boldsymbol{\sigma}_{g}\|^2_e)^{\frac{1}{2}}}\\
\geq &C \|\sigma\|_{\Xi_h},
\end{split}
\end{equation}
which completes the proof.
\end{proof}

\section{Error Equations}\label{Section:Error-Eqns}
The goal of this section is to derive an error equation for the
hybridized WG Algorithm (\ref{2.7})-(\ref{2.7q2}). This error
equation shall play an important role in the forthcoming error
analysis.

\begin{lemma}\label{lemma51} \cite{ww} On each element $T\in {\cal T}_h$, let ${\cal
Q}_h$ be the local $L^2$ projection onto $P_{k-2}(T)$. Then, the
$L^2$ projections $Q_h$ and ${\cal Q}_h$ satisfy the following
commutative property:
\begin{equation}
\partial^2_{ij,w}(Q_h w)={\cal Q}_h(\partial^2_{ij} w),\qquad \forall
i,j=1,\ldots,d,
\end{equation}
 for all $w\in H^2(T)$.
\end{lemma}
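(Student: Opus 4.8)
The plan is to exploit that both $\partial^2_{ij,w}(Q_h w)$ and $\mathcal{Q}_h(\partial^2_{ij}w)$ live in the finite-dimensional space $P_{k-2}(T)$, so that the identity will follow once I check that the two objects have the same $L^2(T)$-inner product against every test polynomial $\varphi\in P_{k-2}(T)$. The whole argument is then a matter of evaluating the defining relation (\ref{2.4}) for the discrete weak Hessian at $v=Q_hw$ and reducing the projected quantities to the unprojected ones using exactness of the $L^2$ projections on the relevant polynomial spaces.

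First I would substitute $v=Q_hw=\{Q_0w,Q_bw,Q_b(\nabla w)\}$ into (\ref{2.4}), so that for every $\varphi\in P_{k-2}(T)$
\begin{equation*}
(\partial^2_{ij,w}(Q_hw),\varphi)_T=(Q_0w,\partial^2_{ji}\varphi)_T-\langle (Q_bw)n_i,\partial_j\varphi\rangle_{\partial T}+\langle Q_b(\partial_iw),\varphi n_j\rangle_{\partial T}.
\end{equation*}
The key observation is a degree count showing each projection acts as the identity here: since $\partial^2_{ji}\varphi$ has degree at most $k-4$, it lies in $P_k(T)$, so $Q_0w$ may be replaced by $w$; and on each face $e\subset\partial T$ the normal components $n_i,n_j$ are constant, hence $n_i\partial_j\varphi|_e$ and $n_j\varphi|_e$ both lie in $P_{k-2}(e)$, which lets me replace $Q_bw$ by $w$ and $Q_b(\partial_iw)$ by $\partial_iw$ in the boundary terms. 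This converts the right-hand side into the same expression written purely in terms of $w$.

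Next I would apply to $(w,\partial^2_{ji}\varphi)_T$ exactly the integration by parts that turns (\ref{2.4}) into (\ref{2.4new}), namely
\begin{equation*}
(w,\partial^2_{ji}\varphi)_T=(\partial^2_{ij}w,\varphi)_T+\langle wn_i,\partial_j\varphi\rangle_{\partial T}-\langle (\partial_iw)n_j,\varphi\rangle_{\partial T},
\end{equation*}
which is legitimate because $w\in H^2(T)$. Substituting this in, the two boundary integrals cancel against the two already present, leaving $(\partial^2_{ij,w}(Q_hw),\varphi)_T=(\partial^2_{ij}w,\varphi)_T$ for all $\varphi\in P_{k-2}(T)$; since such $\varphi$ also satisfies $(\partial^2_{ij}w,\varphi)_T=(\mathcal{Q}_h(\partial^2_{ij}w),\varphi)_T$ by definition of the local projection, the arbitrariness of $\varphi$ yields the claim. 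I expect the only genuine hazard to be bookkeeping with the index pattern of (\ref{2.4}) — the transposed pair $\partial^2_{ji}$ in the volume term and the precise placement of $n_i$ versus $n_j$ on the boundary — since the cancellation of boundary terms is automatic only if the integration-by-parts identity is written in the identical index convention; the degree counting that justifies stripping the projections is routine but is the mechanism that makes the identity hold.
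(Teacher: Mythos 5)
Your argument is correct and complete: the degree counts ($\partial^2_{ji}\varphi\in P_{k-4}(T)\subset P_k(T)$, and $n_i\partial_j\varphi|_e,\ \varphi n_j|_e\in P_{k-2}(e)$ since the normal is constant on each flat face) legitimately strip all three projections, and the double integration by parts cancels the boundary terms exactly, after which testing against all $\varphi\in P_{k-2}(T)$ identifies the two elements of $P_{k-2}(T)$. The paper itself gives no proof here (it only cites \cite{ww}), but your argument is the standard one used for this commutativity identity in the weak Galerkin literature, so there is nothing to flag.
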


Let $u$ and $(u_h;\lambda_h) \in \W_h\times \Xi_h$ be the solutions
of (\ref{0.1}) and (\ref{2.7})-(\ref{2.7q2}), respectively. Let
$\lambda=\{\lambda_b,\lambda_g\}$ be given by
$$
\lambda_b=\partial_n(\triangle u),\quad
\boldsymbol{\lambda}_g=-\partial_n(\nabla u) \qquad \mbox{on }
\partial T.
$$
Define error functions by
\begin{equation}\label{eq:error-functions}
e_h=Q_hu-u_h, \quad \epsilon_h=Q_h\lambda-\lambda_h.
\end{equation}

\begin{lemma}
Let $u$ and $(u_h;\lambda_h) \in \W_h\times \Xi_h$ be the solutions
of (\ref{0.1}) and (\ref{2.7})-(\ref{2.7q2}), respectively. Then,
the error functions $e_h$ and $\epsilon_h$ satisfy the following
equations
\begin{eqnarray}\label{4.1}
a_s(e_h,v)+b(v,\epsilon_h)
&=&\ell_u(v),\quad \forall v\in \W_h^0\\
b(\epsilon_h,\rho)&=&0,\qquad\quad\forall
\rho\in\Xi_h,\label{4.1second}
\end{eqnarray}
where
\begin{equation}\label{4.1-ell}
\begin{split}
\ell_u(v)=&\sum_{T\in{\cal
T}_h}\sum_{i,j=1}^d\langle\partial^2_{ij}u-{\cal Q}_h
(\partial^2_{ij}u),(\partial_iv_0-v_{gi})\cdot n_j\rangle_{\partial T}\\
&-\sum_{T\in{\cal
T}_h}\sum_{i,j=1}^d\langle\partial_j(\partial^2_{ij}u-
{\cal Q}_h\partial^2_{ij}u)\cdot n_i,v_0-v_b\rangle_{\partial T}\\
&+s(Q_hu,v).
\end{split}
\end{equation}

\end{lemma}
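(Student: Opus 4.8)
The plan is to reduce both error equations to identities for the projected exact solution $(Q_hu,Q_h\lambda)$ and then to verify the nontrivial one by integration by parts. Because $a_s$ and $b$ are bilinear and $e_h=Q_hu-u_h$, $\epsilon_h=Q_h\lambda-\lambda_h$, subtracting the hybridized WG equations \eqref{2.7}--\eqref{2.7q2} from the same combinations evaluated at $(Q_hu,Q_h\lambda)$ reduces \eqref{4.1} (with the sign conventions of the scheme) to the single consistency identity
\begin{equation*}
a_s(Q_hu,v)+b(v,Q_h\lambda)=(f,v_0)+\ell_u(v),\qquad\forall v\in\W_h^0,
\end{equation*}
and reduces \eqref{4.1second} to $b(e_h,\rho)=0$. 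The latter is immediate: since $Q_hu=\{Q_0u,Q_bu,Q_b(\nabla u)\}$ is single-valued on every interior face, $\jump{Q_hu}_e=\{0,\textbf{0}\}$, hence $Q_hu\in\V_h$, and \eqref{EQ:b-form} gives $b(Q_hu,\rho)=\sum_{e\in\E_h^0}\langle\jump{Q_hu}_e,\rho_L\rangle_e=0$; combining with the constraint \eqref{2.7q2} yields $b(e_h,\rho)=0$. It remains to establish the consistency identity.

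I would begin from the splitting $a_s=a+s$; the stabilizer term $s(Q_hu,v)$ is exactly the last term of \eqref{4.1-ell}, so it cancels and it suffices to treat $a(Q_hu,v)$. Here I would first invoke the commutativity of Lemma \ref{lemma51} to replace $\partial^2_{ij,w}(Q_hu)$ by ${\cal Q}_h(\partial^2_{ij}u)$, giving $a(Q_hu,v)=\sum_{T}\sum_{ij}({\cal Q}_h\partial^2_{ij}u,\partial^2_{ij,w}v)_T$. Since $\partial^2_{ij,w}v\in P_{k-2}(T)$, the projection ${\cal Q}_h$ may be dropped, and I would then apply the integration-by-parts form \eqref{2.4new} of the discrete weak Hessian with the polynomial test function $\varphi={\cal Q}_h\partial^2_{ij}u$. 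This rewrites $a(Q_hu,v)$ as a volume term $\sum_{ij}(\partial^2_{ij}v_0,\partial^2_{ij}u)_T$ (dropping ${\cal Q}_h$ again against $\partial^2_{ij}v_0\in P_{k-2}$) plus boundary terms pairing $(v_0-v_b)n_i$ with $\partial_j{\cal Q}_h\partial^2_{ij}u$ and $(\partial_iv_0-v_{gi})n_j$ with ${\cal Q}_h\partial^2_{ij}u$.

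The heart of the argument is to integrate the volume term by parts twice on each element. Moving one derivative off $v_0$ produces a boundary pairing of $\partial_iv_0$ with the genuine flux $\partial^2_{ij}u\,n_j$ together with the interior term $-(\nabla(\Delta u),\nabla v_0)_T$; a second integration by parts, using $\Delta^2u=f$ from \eqref{0.1}, turns the latter into $(f,v_0)_T$ minus a boundary pairing of $v_0$ with $\partial_n(\Delta u)$. At this point all traces should be regrouped according to the factor they multiply, namely $\{\partial_iv_0,\,v_0,\,v_{gi},\,v_b\}$. The genuine normal fluxes now present — those produced by the two integrations by parts of the exact term (paired with $\partial_iv_0$ and $v_0$) and those supplied by $b(v,Q_h\lambda)$ through the very definition $\lambda_b=\partial_n(\Delta u)$, $\boldsymbol{\lambda}_g=-\partial_n(\nabla u)$ (paired with $v_b$ and $\textbf{v}_g$, with $Q_b$ free to insert since $v_b,\textbf{v}_g\in P_{k-2}$) — combine group by group with the projected fluxes ${\cal Q}_h\partial^2_{ij}u$ and $\partial_j{\cal Q}_h\partial^2_{ij}u$ coming from the discrete Hessian. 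Each pairing collapses to the projection-error difference $\partial^2_{ij}u-{\cal Q}_h\partial^2_{ij}u$, reproducing precisely the first two sums of \eqref{4.1-ell}, while the $(f,v_0)$ contribution matches the right-hand side of the identity.

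I expect the main obstacle to be exactly this boundary-term bookkeeping: carrying out the two elementwise integrations by parts, reorganizing the resulting traces by $\{\partial_iv_0,v_0,v_{gi},v_b\}$, and checking that each genuine flux is matched with the correct projected flux so that the differences collapse to ${\cal Q}_h$-projection errors, with the $v_b$ and $v_{gi}$ fluxes being supplied by $b(v,Q_h\lambda)$ under the sign convention fixed by the definition of $\lambda$. Keeping the signs consistent across \eqref{2.4new}, the two integrations by parts, and the definition of $\lambda$ is the delicate point; the projection insert/drop steps are routine once one verifies that every test object ($\partial^2_{ij}v_0$, $v_b$, $\textbf{v}_g$, and $\partial^2_{ij,w}v$) has degree at most $k-2$.
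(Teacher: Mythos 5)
Your proposal is correct and follows essentially the same route as the paper: the reduction to the consistency identity $a_s(Q_hu,v)+b(v,Q_h\lambda)=(f,v_0)+\ell_u(v)$, the use of Lemma \ref{lemma51} together with \eqref{2.4new} applied to the polynomial $\varphi={\cal Q}_h(\partial^2_{ij}u)$, the elementwise double integration by parts invoking $\Delta^2u=f$, and the matching of the genuine fluxes supplied by $b(v,Q_h\lambda)$ (via the definition of $\lambda$) against the projected fluxes so that everything collapses to the ${\cal Q}_h$-projection errors in \eqref{4.1-ell}. Your justification of the second equation via $\jump{Q_hu}_e=\{0,\textbf{0}\}$ is in fact more explicit than the paper's one-line remark, and the sign tension you hedge on between $-b(v,\lambda_h)$ in \eqref{2.7} and $+b(v,\epsilon_h)$ in \eqref{4.1} is already present in the paper's own derivation, so it is not a defect of your argument.
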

\begin{proof} The equation (\ref{4.1second}) is obvious from the definition of $\epsilon_h$. It remains to
verify (\ref{4.1}). To this end, from (\ref{2.4new}) we have for any
$\varphi\in P_{k-2}(T)$,
\begin{equation*}
(\varphi,\partial_{ij,w}^2v)_T
=(\partial^2_{ij}v_0,\varphi)_T+\langle v_0-v_b,\partial_j\varphi
\cdot n_i\rangle_{\partial T}-\langle(\partial_i v_0-v_{gi})\cdot
n_j,\varphi\rangle_{\partial T}.
\end{equation*}
By substituting $\varphi$ by $\partial_{ij,w}^2 Q_hu$ and then using
Lemma \ref{lemma51}, we obtain
\begin{equation*}
\begin{split}
&(\partial_{ij,w}^2 Q_hu,\partial_{ij,w}^2v)_T\\
=&(\partial^2_{ij}v_0,{\cal Q}_h(\partial^2_{ij}u))_T+\langle
v_0-v_b,\partial_j({\cal Q}_h
(\partial^2_{ij}u))\cdot n_i\rangle_{\partial T}\\
&-\langle(\partial_i v_0-v_{gi})\cdot n_j,{\cal Q}_h(\partial^2_{ij} u)\rangle_{\partial T}\\
=&(\partial^2_{ij}v_0, \partial^2_{ij}u)_T+\langle
v_0-v_b,\partial_j({\cal Q}_h(\partial^2_{ij}u))
\cdot n_i\rangle_{\partial T}\\
&-\langle(\partial_i v_0-v_{gi})\cdot n_j,{\cal
Q}_h(\partial^2_{ij} u)\rangle_{\partial T},
\end{split}
\end{equation*}
which can be rewritten as
\begin{equation}\label{4.2}
\begin{split}
(\partial^2_{ij}u, \partial^2_{ij}v_0)_T=&
(\partial_{ij,w}^2(Q_hu),\partial_{ij,w}^2v)_T-
\langle v_0-v_b,\partial_j({\cal Q}_h(\partial^2_{ij}u))\cdot n_i\rangle_{\partial T}\\
&+\langle(\partial_i v_0-v_{gi})\cdot n_j,{\cal
Q}_h(\partial^2_{ij} u)\rangle_{\partial T}.
\end{split}
\end{equation}

With $\lambda_b=\partial_n(\triangle u)$ and
$\boldsymbol{\lambda}_g=-\partial_n(\nabla u)$ we have
\begin{equation*}
\begin{split}
b(Q_h\lambda,v)=&\sum_{T\in T_h}\langle
Q_h\lambda,v\rangle_{\partial T} =\sum_{T\in T_h}\langle
\lambda,v\rangle_{\partial
T}\\
=&\sum_{T\in T_h}\langle \boldsymbol{\lambda_g},\textbf{v}_g
\rangle_{\partial T}+
\sum_{T\in T_h}\langle \lambda_b,v_b\rangle_{\partial T}\\
=&\sum_{T\in {\cal T}_h}\sum_{i,j=1}^d \langle-
\partial^2_{ij}u\cdot n_j, v_{gi}\rangle_{\partial T}+ \sum_{T\in
{\cal T}_h}\sum_{i,j=1}^d \langle
\partial_j(\partial^2_{ij}u)\cdot n_i, v_{b}\rangle_{\partial T}.
\end{split}
\end{equation*}
In addition, from the integration by parts,
\begin{equation*}
(\partial^2_{ij}u, \partial^2_{ij}v_0)_T=
((\partial^2_{ij})^2u,v_0)_T+\langle \partial^2_{ij}u,
\partial_i v_0\cdot n_j\rangle_{\partial T}
-\langle \partial_j(\partial^2_{ij}u)\cdot n_i, v_0\rangle_{\partial
T}.
\end{equation*}
Summing over all $T\in {\cal T}_h$ and then using the fact that
$(\triangle^2 u,v_0)=(f,v_0)$, we obtain
\begin{equation*}
\begin{split}
b(Q_h \lambda,v)&+\sum_{T\in {\cal
T}_h}\sum_{i,j=1}^d(\partial^2_{ij}u, \partial^2_{ij}v_0)_T
= (f,v_0)\\
& +\sum_{T\in {\cal T}_h}\sum_{i,j=1}^d\langle \partial^2_{ij}u,(\partial_i v_0-v_{gi})\cdot n_j\rangle_{\partial T}\\
&-\sum_{T\in {\cal T}_h}\sum_{i,j=1}^d\langle
\partial_j(\partial^2_{ij}u)\cdot n_i, v_0-v_b\rangle_{\partial T}.
\end{split}
\end{equation*}
Combining the above equation with (\ref{4.2}) yields
\begin{equation*}
\begin{split}
&b(Q_h \lambda,v)+ (\partial_w^2 Q_hu,\partial_w^2 v)_h\\
=& (f,v_0)+\sum_{T\in{\cal T}_h}\sum_{i,j=1}^d\langle\partial^2_{ij}u-{\cal Q}_h(\partial^2_{ij}u),(\partial_iv_0-v_{gi})\cdot n_j\rangle_{\partial T}\\
&-\sum_{T\in{\cal
T}_h}\sum_{i,j=1}^d\langle\partial_j(\partial^2_{ij}u-{\cal
Q}_h\partial^2_{ij}u)\cdot n_i,v_0-v_b\rangle_{\partial T}.
\end{split}
\end{equation*}
Adding $s(Q_hu,v)$ to both sides of the above equation gives
\begin{equation}\label{4.3}
\begin{split}
 (\partial_w^2 Q_hu,\partial_w ^2v)_h&+s(Q_hu,v)+b(Q_h \lambda,v)
 = (f,v_0)\\
 &+\sum_{T\in{\cal T}_h}\sum_{i,j=1}^d\langle\partial^2_{ij}u-{\cal Q}_h(\partial^2_{ij}u),(\partial_iv_0-v_{gi})\cdot n_j\rangle_{\partial T}\\
&-\sum_{T\in{\cal
T}_h}\sum_{i,j=1}^d\langle\partial_j(\partial^2_{ij}u-{\cal
Q}_h\partial^2_{ij}u)\cdot n_i,v_0-v_b\rangle_{\partial
T}+s(Q_hu,v).
\end{split}
\end{equation}
Subtracting (\ref{2.7}) from (\ref{4.3}) gives the desired equation
(\ref{4.1}). This completes the proof.
\end{proof}

\section{Error Estimates}\label{Section:Error-Estimates} The goal of this section is
to establish some error estimates for the hybridized WG finite
element solution $(u_h;\lambda_h)$ arising from
(\ref{2.7})-(\ref{2.7q2}). The error equations
(\ref{4.1})-(\ref{4.1second}) imply
\begin{eqnarray*}
a_s(Q_hu-u_h,v) + b(v,Q_h\lambda-\lambda_h)&=& \ell_u(v), \quad \forall v\in  \W_h^0,\\
 b(Q_hu-u_h,\rho)&=&0, \ \ \qquad \forall \rho\in \Xi_h,
\end{eqnarray*}
where $\ell_u(v)$ is given by (\ref{4.1-ell}). The above is a saddle
point problem for which the Brezzi's theorem \cite{b1974} can be
applied for an analysis on its stability and solvability. Note that
all the conditions of Brezzi's theorem have been verified in Section
\ref{Section:Stability} (see Lemmas
\ref{lemboundedness}-\ref{lemmainfsup}).

\begin{theorem} \label{theoestimate}
Let $u$ and $(u_h;\lambda_h) \in \W_h\times \Xi_h$ be the solutions
of (\ref{0.1}) and (\ref{2.7})-(\ref{2.7q2}) respectively. Then,
there exists a constant $C$ such that
\begin{equation}\label{esti1}
\|Q_hu-u_h\|_{\W_h^0}+\|Q_h \lambda - \lambda_h\|_{\Xi_h} \leq
Ch^{k-1}\Big(\|u\|_{k+1}+\delta_{k,2}\|u\|_4\Big),
\end{equation}
where $\delta_{i,j}$ is the Kronecker's delta with value $1$ for
$i=j$ and $0$ otherwise.
\end{theorem}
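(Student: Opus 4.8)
The plan is to read the error system displayed just above the theorem as a saddle-point problem and invoke Brezzi's theorem \cite{b1974}. Its four structural hypotheses have all been verified in Section \ref{Section:Stability}: boundedness of $a_s(\cdot,\cdot)$ and $b(\cdot,\cdot)$ (Lemma \ref{lemboundedness}), coercivity of $a_s$ on the kernel $\{v\in\W_h^0:\ b(v,\rho)=0\ \forall\rho\in\Xi_h\}$, which by the argument of Theorem \ref{lem3.1} is exactly $\V_h^0$ (Lemma \ref{lemcovercivity}), and the inf--sup condition for $b$ (Lemma \ref{lemmainfsup}). Since the second error equation (\ref{4.1second}) has zero right-hand side, the abstract a priori estimate of Brezzi's theorem collapses to a single bound on the dual norm of the consistency functional,
\begin{equation*}
\|Q_hu-u_h\|_{\W_h^0}+\|Q_h\lambda-\lambda_h\|_{\Xi_h}\ \leq\ C\,\sup_{v\in\W_h^0}\frac{|\ell_u(v)|}{\|v\|_{\W_h^0}},
\end{equation*}
with $\ell_u$ given by (\ref{4.1-ell}). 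The whole theorem therefore reduces to showing that this supremum is at most $Ch^{k-1}(\|u\|_{k+1}+\delta_{k,2}\|u\|_4)$.

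The second step is to estimate $\ell_u(v)$ over its three summands. For the first two, which are boundary pairings of a projection error against a weak-function mismatch, I would apply Cauchy--Schwarz on each $\partial T$ with weights $h_T^{1/2}$ and $h_T^{3/2}$, thereby factoring $|\ell_u(v)|$ into a \emph{data factor} built from $\partial^2_{ij}u-\Q_h(\partial^2_{ij}u)$ and a \emph{test factor} built from $\nabla v_0-\mathbf{v}_g$ and $v_0-v_b$. The data factor is routine: the trace inequality (Lemma \ref{Lemma:trace inequality}) together with the approximation properties of $\Q_h$ gives $h_T^{1/2}\|\partial^2_{ij}u-\Q_h(\partial^2_{ij}u)\|_{\partial T}\leq Ch_T^{k-1}\|u\|_{k+1,T}$ and, after one more derivative, $h_T^{3/2}\|\partial_j(\partial^2_{ij}u-\Q_h\partial^2_{ij}u)\|_{\partial T}\leq Ch_T^{k-1}\|u\|_{k+1,T}$. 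The third summand $s(Q_hu,v)$ is handled directly: writing it through $Q_b\nabla(Q_0u-u)$ and $Q_b(Q_0u-u)$, Cauchy--Schwarz against the stabilization part of $\|v\|_{\W_h^0}$ and the approximation estimates for $Q_0,Q_b$ yield an $h^{k-1}\|u\|_{k+1}$ bound with no loss.

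The crux, and the step I expect to be the main obstacle, is controlling the test factor, i.e. proving the auxiliary bound
\begin{equation*}
\sum_{T\in\T_h}h_T^{-1}\|\nabla v_0-\mathbf{v}_g\|_{\partial T}^2+h_T^{-3}\|v_0-v_b\|_{\partial T}^2\ \leq\ C\,\|v\|_{\W_h^0}^2,\qquad\forall v\in\W_h^0.
\end{equation*}
The difficulty is that $\|v\|_{\W_h^0}$ controls only the $Q_b$-\emph{projected} mismatches $Q_b\nabla v_0-\mathbf{v}_g$ and $Q_bv_0-v_b$ through $s_T$, whereas the full, unprojected traces appear here. I would use the orthogonal splitting $\nabla v_0-\mathbf{v}_g=(Q_b\nabla v_0-\mathbf{v}_g)+(I-Q_b)\nabla v_0$ (and likewise for $v_0-v_b$), noting that $(I-Q_b)$ annihilates $\mathbf{v}_g$ and $v_b$ since these are of degree $k-2$. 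The projected part sits in the norm immediately; the complementary part $(I-Q_b)\nabla v_0$ must instead be absorbed into the discrete weak Hessian $a(v,v)$ by testing the defining identity (\ref{2.4new}) against suitable polynomials and combining with an inverse inequality. This $L^2$-orthogonality is precisely what keeps the corresponding pairings against the data factor legitimate, since $\partial^2_{ij}u\cdot n_j-Q_b(\partial^2_{ij}u\cdot n_j)$ then supplies the extra power of $h_T$. Once this estimate is established, combining it with the data-factor bounds delivers the $h^{k-1}\|u\|_{k+1}$ contribution for every $k\ge2$.

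Finally, the term $\delta_{k,2}\|u\|_4$ is a genuine low-order artifact appearing only at $k=2$. There $\Q_h$ projects onto constants, so $\partial_j(\partial^2_{ij}u-\Q_h\partial^2_{ij}u)=\partial_j\partial^2_{ij}u$ is a full third-order derivative rather than a projection error, and bounding its boundary trace through Lemma \ref{Lemma:trace inequality} forces the fourth-order norm $\|u\|_4$. I would isolate this single term, record the extra $\|u\|_4$ with the Kronecker factor, and assemble all pieces to conclude (\ref{esti1}).
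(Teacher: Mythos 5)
Your reduction to Brezzi's theorem is exactly the paper's first step: the hypotheses are those verified in Lemmas \ref{lemboundedness}--\ref{lemmainfsup} (with the kernel of $b$ in $\W_h^0$ identified as $\V_h^0$), and the estimate collapses to bounding $\|\ell_u\|_{(\W_h^0)'}$. Where you part ways with the paper is in how that dual norm is handled. The paper does not re-derive the consistency estimate: it quotes from \cite{ww} the bound $|\ell_u(v)|\le Ch^{k-1}(\|u\|_{k+1}+\delta_{k,2}\|u\|_4)\,\3bar v\3bar$ for $v\in\W_h^0$ and finishes with the one-line observation that $\3bar v\3bar\le\|v\|_{\W_h^0}$, so the supremum of $\ell_u(v)/\|v\|_{\W_h^0}$ is dominated by the supremum of $\ell_u(v)/\3bar v\3bar$. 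In particular the paper never needs to control the \emph{unprojected} mismatches $\nabla v_0-\textbf{v}_g$ and $v_0-v_b$ in the weighted boundary norms you introduce; in \cite{ww} the boundary pairings are kept intact and the orthogonality $\langle(I-Q_b)(\text{data}),(I-Q_b)(\text{test})\rangle_e$ is exploited on the data side.

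The genuine gap is your ``crux'' inequality
$\sum_{T}h_T^{-1}\|\nabla v_0-\textbf{v}_g\|_{\partial T}^2+h_T^{-3}\|v_0-v_b\|_{\partial T}^2\le C\|v\|_{\W_h^0}^2$,
which is \emph{false} for $k=2$, precisely the lowest order covered by the theorem. Take a simplicial mesh, $\phi$ smooth and compactly supported in $\Omega$ with $|\nabla\phi|\sim 1$ on a fixed subregion, and set $v_0=I_h\phi$ (the continuous piecewise linear interpolant, admissible since $P_1\subset P_2$), $v_b=Q_bv_0$ and $\textbf{v}_g|_{\partial T}=\nabla v_0|_T$. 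Then $s(v,v)=0$; by (\ref{2.4new}) every $\partial^2_{ij,w}v$ vanishes (the test space is $P_0$ and the boundary residuals are $Q_b$-orthogonal to it), so $a(v,v)=0$; $\jump{v_b}_e=0$ by continuity of $v_0$; and $\sum_eh_e^{-1}\|\jump{\textbf{v}_g}\|_e^2=O(1)$ since the gradient jumps of $I_h\phi$ are $O(h)$. Hence $\|v\|_{\W_h^0}^2=O(1)$. Yet on each of the $\sim h^{-d}$ elements in the subregion, $v_0-Q_bv_0$ is a linear function on each edge minus its mean with slope $\sim 1$, so $h_T^{-3}\|v_0-v_b\|_{\partial T}^2\sim h_T^{-3}\cdot h^2\cdot h^{d-1}=h^{d-2}$, and the sum is $\sim h^{-2}\to\infty$. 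The local absorption you propose (testing (\ref{2.4new}) and using inverse inequalities) cannot repair this: for $k=2$ the operator $Q_b$ projects onto constants, so $(I-Q_b)v_0$ sees the full gradient of $v_0$, which is not controlled by the weak Hessian plus stabilizer on a single element. (For $k\ge3$, where $P_{k-2}(e)\supseteq P_1(e)$, your local argument does go through, since then $\|(I-Q_b)v_0\|_e\le Ch_T^2\,h_T^{-1/2}\|D^2v_0\|_T$ and $\|D^2v_0\|_T\le Ca_{s,T}(v,v)^{1/2}$.) The correct treatment at $k=2$ must keep the pairing together and shift the $(I-Q_b)$ gain onto the data factor with a different distribution of the powers of $h_T$ --- which is exactly where $\delta_{k,2}\|u\|_4$ originates --- rather than splitting off the test factor in the $h_T^{-3}$-weighted norm. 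As written, your argument does not establish (\ref{esti1}) for $k=2$.
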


\begin{proof} From the Brezzi's theorem \cite{b1974}, we have
\begin{equation}\label{f}
\|Q_hu-u_h\|_{\W_h^0} + \|Q_h \lambda - \lambda_h\|_{\Xi_h} \leq
C\|\ell_u\|_{{\W_h^0}'}.
\end{equation}
For any $v \in \W_h^0$, it has been shown in \cite{ww} that
$$
|\ell_u(v)| \leq
Ch^{k-1}\Big(\|u\|_{k+1}+\delta_{k,2}\|u\|_4\Big)\3bar v \3bar.
$$
Thus, we have
\begin{equation}\label{f2}
\|\ell_u\|_{{\W_h^0}'} =\sup_{v \in W_h^0 }\frac{\ell_u(v )}{\|v
\|_{\W_h^0}}\leq\sup_{v \in \W_h^0 }\frac{\ell_u(v )}{\3barv
\3bar}\leq Ch^{k-1} \Big(\|u\|_{k+1}+\delta_{k,2}\|u\|_4\Big).
\end{equation}
Substituting (\ref{f2}) into (\ref{f}) yields the desired estimate
(\ref{esti1}), which completes the proof.
\end{proof}

\begin{theorem}
Let $u$ and $ \lambda_h=\{\lambda_{h,b},\boldsymbol{\lambda}_{h,g}\}
\in \Xi_h$ be  the solution  of (\ref{0.1}) and part of the solution
of (\ref{2.7})-(\ref{2.7q2}), respectively. On the set of interior
edges $\E_h^0$, let $\lambda=\{\lambda_b,\boldsymbol{\lambda}_g
 \}$ be given by
$$
\lambda_b= \partial_n(\Delta u),\qquad
 \boldsymbol{\lambda}_g=
 -\partial_n(\nabla u).
$$
Then, the following estimate holds true
\begin{equation}\label{lambdab}
 \|\lambda - \lambda_{h}\|_{\Xi_h}
\leq Ch^{k-1}\big(\|u\|_{k+1}+ \delta_{k,2} \|u\|_{4}\big).
\end{equation}
\end{theorem}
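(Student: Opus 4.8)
The plan is to compare $\lambda_h$ with the projection $Q_h\lambda\in\Xi_h$ and then invoke the estimate already established in Theorem \ref{theoestimate}. First I observe that the exact multiplier $\lambda$ satisfies the similarity-zero condition: on an interior edge $e$ shared by $T_1,T_2$ the outward normals are opposite, so both $\lambda_b=\partial_n(\Delta u)$ and $\boldsymbol{\lambda}_g=-\partial_n(\nabla u)$ change sign across $e$, whence $\lambda^L+\lambda^R=0$; the same then holds for $Q_h\lambda$ because $Q_b$ is the same projection on both sides. Consequently $\lambda-\lambda_h$ is admissible for the norm (\ref{xinorm}) and the triangle inequality gives
\begin{equation*}
\|\lambda - \lambda_h\|_{\Xi_h} \le \|\lambda - Q_h\lambda\|_{\Xi_h} + \|Q_h\lambda - \lambda_h\|_{\Xi_h}.
\end{equation*}
The second term is precisely $\|\epsilon_h\|_{\Xi_h}$ with $\epsilon_h=Q_h\lambda-\lambda_h$, which is bounded by $Ch^{k-1}(\|u\|_{k+1}+\delta_{k,2}\|u\|_4)$ by (\ref{esti1}). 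Thus it remains only to bound the projection error $\|\lambda-Q_h\lambda\|_{\Xi_h}$ by the same quantity.

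For that term I would expand the norm by its definition,
\begin{equation*}
\|\lambda - Q_h\lambda\|_{\Xi_h}^2 = \sum_{e\in\E_h^0} h_e^3\|\lambda_b - Q_b\lambda_b\|_e^2 + h_e\|\boldsymbol{\lambda}_g - Q_b\boldsymbol{\lambda}_g\|_e^2,
\end{equation*}
and estimate the two $L^2(e)$ projection errors separately by standard approximation theory. For an edge $e\subset\partial T$ and a smooth scalar $g$ I would use the optimality of $Q_b$ in $L^2(e)$ to write $\|g-Q_bg\|_e\le\|g-{\cal Q}_hg\|_e$, where ${\cal Q}_h$ is the $L^2$ projection onto $P_{k-2}(T)$ from Lemma \ref{lemma51}, and then apply the trace inequality (\ref{eq:trace inequality-p=2}) to $g-{\cal Q}_hg$ together with the Bramble--Hilbert bounds for ${\cal Q}_h$.

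Carrying this out, the component $\boldsymbol{\lambda}_g=-\partial_n(\nabla u)$ is a second-order derivative of $u$; using $m=k-1$ derivatives in Bramble--Hilbert gives $\|\boldsymbol{\lambda}_g-Q_b\boldsymbol{\lambda}_g\|_e^2\le Ch_T^{2k-3}|u|_{k+1,T}^2$, so the weight $h_e$ yields $h_e\|\cdots\|_e^2\le Ch^{2(k-1)}|u|_{k+1,T}^2$. The component $\lambda_b=\partial_n(\Delta u)$ is a third-order derivative, hence lies only in $H^{k-2}(T)$ when $u\in H^{k+1}$; using $m=k-2$ derivatives gives $\|\lambda_b-Q_b\lambda_b\|_e^2\le Ch_T^{2k-5}|u|_{k+1,T}^2$, and the weight $h_e^3$ again produces $Ch^{2(k-1)}|u|_{k+1,T}^2$. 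Summing over interior edges and elements and taking square roots gives $\|\lambda-Q_h\lambda\|_{\Xi_h}\le Ch^{k-1}\|u\|_{k+1}$, which combined with the first paragraph yields (\ref{lambdab}).

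The main obstacle is the lowest-order case $k=2$, where $P_{k-2}=P_0$ and the argument for $\lambda_b$ breaks down: with $u\in H^3$ the function $\lambda_b=\partial_n(\Delta u)$ is only in $L^2(T)$, so neither does its $L^2(T)$ projection onto constants decay in $h$, nor is the trace inequality even applicable to $\lambda_b$. In that case I would instead assume $u\in H^4$, so that $\lambda_b\in H^1(T)$, and run the above estimate with $m=1$, measuring $|\lambda_b|_{1,T}\le|u|_{4,T}$; this restores the needed power of $h$ and produces the anomalous term $\delta_{k,2}\|u\|_4$, exactly mirroring the Kronecker term in (\ref{esti1}). All remaining steps are routine scaling with the trace inequality (\ref{eq:trace inequality-p=2}) and Bramble--Hilbert; the only point requiring care is bookkeeping of the regularity used for the third-derivative component $\lambda_b$.
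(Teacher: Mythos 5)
Your proof follows essentially the same route as the paper: the same triangle-inequality split through $Q_h\lambda$, the same appeal to Theorem \ref{theoestimate} for $\|Q_h\lambda-\lambda_h\|_{\Xi_h}$, and the same trace-inequality/approximation argument for the projection error, including the correct identification of the extra $\delta_{k,2}\|u\|_4$ regularity needed for $\lambda_b=\partial_n(\Delta u)$ when $k=2$. The only cosmetic difference is that you compare $Q_b(\partial_n\Delta u)$ against the element $L^2$ projection of $\partial_n\Delta u$ onto $P_{k-2}(T)$, whereas the paper compares it against $\partial_n(Q_{k-1}\Delta u)$; both are legitimate uses of the $L^2(e)$-optimality of $Q_b$ and lead to the same powers of $h$.
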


\begin{proof}
From the triangle inequality,
\begin{equation}\label{lambda1}
\|\lambda - \lambda_{h}\|_{\Xi_h} \leq \| \lambda - Q_h\lambda
\|_{\Xi_h} +\| Q_h\lambda - \lambda_{h}\|_{\Xi_h}.
\end{equation}
The second term on the right-hand side of (\ref{lambda1}) can be
handled by (\ref{esti1}). The first term is simply the error between
$\lambda$ and its $L^2$ projection, and can be rewritten as
\begin{equation}\label{lambda2}
\begin{split}
\|\lambda - Q_h\lambda \|_{\Xi_h}^2 = & \sum_{e\in {\mathcal
E}_h^0}h_e^3\|\lambda_b - Q_b\lambda\|_e^2 +  h_e\|{\blambda}_g - Q_b\blambda_g\|_e^2\\
= & \sum_{e\in {\mathcal
E}_h^0}h_e^3\|\partial_n\Delta u - Q_b(\partial_n\Delta u)\|_e^2 +  h_e\|\partial_n\nabla u -
Q_b(\partial_n\nabla u)\|_e^2.\\
\end{split}
\end{equation}
Let $e$ be an edge of the element $T$ and denote by $Q_{k-1}$ the
$L^2$ projection onto $P_{k-1}(T)$. From the trace inequality
(\ref{eq:trace inequality-p=2}), we obtain
\begin{equation}\label{lambda2.100}
\begin{split}
&\|\partial_n\Delta u - Q_b(\partial_n\Delta u)\|_e^2 \\
\leq &\|\partial_n\Delta u - \partial_n(Q_{k-1}\Delta u)\|_e^2\\
\leq &C h^{-1} \|\Delta u - Q_{k-1}\Delta u\|_{1,T}^2 + C h \|\Delta
u - Q_{k-1}\Delta u\|_{2,T}^2\\
\leq &Ch^{2k-5}\|u\|_{k+1,T}^2 + C h\delta_{k,2}\|u\|_{4,T}^2.
\end{split}
\end{equation}
Analogously,
\begin{equation}\label{lambda2.110}
\begin{split}
&\|\partial_n\nabla u - Q_b(\partial_n\nabla u)\|_e^2 \\
\leq &\|\partial_n\nabla u - \partial_n(Q_{k-1}\nabla u)\|_e^2\\
\leq & C h^{-1} \|\nabla u - Q_{k-1}\nabla u\|_{1,T}^2 + C h
\|\nabla
u - Q_{k-1}\nabla u\|_{2,T}^2\\
\leq & Ch^{2k-3}\|u\|_{k+1,T}^2.
\end{split}
\end{equation}
Substituting (\ref{lambda2.100}) and (\ref{lambda2.110}) into
(\ref{lambda2}) yields
\begin{equation}\label{lambda2.800}
\|\lambda - Q_h\lambda \|_{\Xi_h}^2 \leq Ch^{2k-2}(\|u\|_{k+1}^2 + C
h^2\delta_{k,2}\|u\|_{4}^2).
\end{equation}
This completes the proof of the theorem.
\end{proof}

\section{Efficient Implementation via Variable
Reduction}\label{Section:Variable-Reduction}

The degrees of freedom in the WG algorithm (\ref{algori1}) can be
divided into two classes: (1) the interior variables representing
$u_0$, and (2) the interface variables for $\{u_b, \textbf{u}_g\}$.
For the hybridized WG algorithm (\ref{2.7})-(\ref{2.7q2}), more
unknowns must be added to the picture from the Lagrange multiplier
$\lambda_h$. Thus, the size of the discrete system arising from
either (\ref{algori1}) or (\ref{2.7})-(\ref{2.7q2}) is enormously
large.

The goal of this section is to present a Schur complement
formulation for the WG algorithm (\ref{algori1}) based on the
hybridized formulation (\ref{2.7})-(\ref{2.7q2}). The method shall
eliminate all the unknowns associated with $u_0$, and produce a much
reduced system of linear equations involving only the unknowns
representing the interface variables $\{u_b, \textbf{u}_g\}$.

\subsection{Theory of variable reduction} Denote by $\B_h$ the interface finite element
space defined as the restriction of the finite element space $\V_h$
on the set of edges ${\mathcal E}_h$; i.e.,
$$
\B_h=\{\{\mu_b,\boldsymbol{\mu}_g\}:\mu_b\in
P_{k-2}(e),\boldsymbol{\mu}_g\in [P_{k-2}(e)]^d, e\in {\mathcal
E}_h\}.
$$
$\B_h$ is a Hilbert space equipped with the following inner product
$$
\langle \{w_b,\textbf{w}_g\},\{q_b,\textbf{q}_g\}\rangle_{ {\mathcal
E}_h}=\sum_{e\in  {\mathcal E}_h} \langle w_b,q_b\rangle_e+\langle
\textbf{w}_g,\textbf{q}_g\rangle_e ,
\qquad\forall\{w_b,\textbf{w}_g\}, \{q_b,\textbf{q}_g\}\in \B_h.
$$
Denote by $\B_h^0$ the subspace of $\B_h$ consisting of functions
with vanishing boundary value.

We introduce an operator $S_f:\ \B_h\to \B_h^0$ as follows. For any
$\{w_b,\textbf{w}_g\}\in \B_h$, the image
$S_f(\{w_b,\textbf{w}_g\})$ is obtained as follows:

\begin{description}
\item[Step 1.] On each element $T\in {\cal T}_h$, compute $w_0$ in
terms of $\{w_b,\textbf{w}_g\}$ by solving the following local
equations
\begin{equation}\label{6.1}
a_{s,T}(w_h,v)=(f,v_0)_T, \qquad \forall v=\{v_0,0,\textbf{0}\}\in
\W_k(T),
\end{equation}
where $w_h=\{w_0,w_b,\textbf{w}_g\}\in \W_k(T)$. We denote the
solution by $w_0=D_f(\{w_b,\textbf{w}_g\})$.

\item[\bf Step 2.]  Compute $\zeta_{h,T}\in \Lambda_k(\partial T)$ on each element $T\in {\cal T}_h$
such that
\begin{equation}\label{6.2}
b_T(v,\zeta_{h,T})=a_{s,T}(w_h,v), \quad \forall
v=\{0,v_b,\textbf{v}_g\}\in \W_k(T).
\end{equation}
This provides a function $\zeta_h\in \Lambda_h$. Denote $\zeta_h$ by
$\zeta_h=L_f(\{w_b,\textbf{w}_g\})$.

\item[\bf Step 3.] Set $S_f(\{w_b,\textbf{w}_g\})$ as
the similarity of $\zeta_h$ on interior edges and zero on boundary
edges; i.e.,
\begin{equation}\label{6.3}
\begin{split}
S_f(\{w_b,\textbf{w}_g\}) = \left\{\begin{array}{ll}
{\zeta_h}_{L}+{\zeta_h}_{R}, &
\mbox{on\ } e\in{\mathcal E}_h^0, \\
0, & \mbox{on\ } e\subset \partial\Omega.
\end{array}\right.
\end{split}
\end{equation}
\end{description}

By adding the two equations (\ref{6.1}) and (\ref{6.2}), we obtain
the following identity
\begin{equation}\label{6.4}
b_T(v,\zeta_{h,T})=a_{s,T}(w_h,v)-(f,v_0)_T, \quad \forall
v=\{v_0,v_b,\textbf{v}_g\}\in \W_k(T).
\end{equation}
From the superposition principle one has the following result.

\begin{lemma} For any $\{w_b,\textbf{w}_g\}\in \B_h$, we have
\begin{equation}\label{6.5}
S_f(\{w_b,\textbf{w}_g\})=S_0(\{w_b,\textbf{w}_g\})+S_f(\{0,\textbf{0}\}).
\end{equation}
Here $S_0$ is the operator corresponding to the case of $f=0$.
\end{lemma}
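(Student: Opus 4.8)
The plan is to establish (\ref{6.5}) by the superposition principle, exploiting the fact that each of the three steps defining $S_f$ is an affine operation in which the source term $f$ enters only through the right-hand sides. For a fixed $f$, I would regard $S_f$ as a map of the interface datum $g=\{w_b,\textbf{w}_g\}\in\B_h$; the identity (\ref{6.5}) is then exactly the assertion that this map is affine, with linear part equal to the operator $S_0$ (the $f=0$ case) and constant part equal to $S_f(\{0,\textbf{0}\})$. To make this precise I would introduce the $f=0$ analogues $D_0$ and $L_0$ of the local solution operators $D_f$ and $L_f$ from Steps 1 and 2, and then trace linearity through the three steps in turn.

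First, for Step 1, let $w_0^f=D_f(g)$ solve (\ref{6.1}) with datum $g$ and source $f$, let $w_0^0=D_0(g)$ solve the same problem with $f=0$, and let $w_0^{f,0}=D_f(\{0,\textbf{0}\})$ solve it with source $f$ and zero datum. Using the bilinearity of $a_{s,T}$ in its first argument, the sum of the defining equations for $w_0^0$ and $w_0^{f,0}$ reads
\[
a_{s,T}(\{w_0^0+w_0^{f,0},w_b,\textbf{w}_g\},v)=(f,v_0)_T,\qquad \forall v=\{v_0,0,\textbf{0}\}\in\W_k(T).
\]
Since (\ref{6.1}) determines $w_0$ uniquely from $g$ and $f$, comparison with the equation for $w_0^f$ gives $D_f(g)=D_0(g)+D_f(\{0,\textbf{0}\})$; in particular the weak functions add, $w_h^f=w_h^0+w_h^{f,0}$, because their interface components add as well.

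Next, I would feed this additivity into Step 2. With $\zeta_h^f=L_f(g)$, $\zeta_h^0=L_0(g)$ and $\zeta_h^{f,0}=L_f(\{0,\textbf{0}\})$ the associated multipliers, the linearity of $a_{s,T}(\cdot,v)$ and of $b_T(v,\cdot)$ yields
\[
b_T(v,\zeta_h^0+\zeta_h^{f,0})=a_{s,T}(w_h^0,v)+a_{s,T}(w_h^{f,0},v)=a_{s,T}(w_h^f,v),\qquad \forall v=\{0,v_b,\textbf{v}_g\}\in\W_k(T).
\]
As (\ref{6.2}) is uniquely solvable (testing with $v_b=\zeta_b$, $\textbf{v}_g=\boldsymbol{\zeta}_g$ shows its kernel is trivial), this forces $L_f(g)=L_0(g)+L_f(\{0,\textbf{0}\})$. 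Finally, Step 3 applies the similarity operation (\ref{6.3}), which is linear since both the left/right sum on interior edges and the truncation to zero on $\partial\Omega$ respect addition. Applying it to $\zeta_h^f=\zeta_h^0+\zeta_h^{f,0}$ and recalling that the similarity of $\zeta_h^0$ is $S_0(g)$ while that of $\zeta_h^{f,0}$ is $S_f(\{0,\textbf{0}\})$ delivers (\ref{6.5}).

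The one substantive point, and the place I would flag as the main obstacle, is the unique solvability of the local problems in Steps 1 and 2, which is what licenses passing from ``both quantities satisfy the same equation'' to ``the quantities coincide.'' For Step 2 this is elementary, as indicated. For Step 1 it rests on the local coercivity of $a_{s,T}$ on the interior unknown, i.e. that $a_{s,T}(\{w_0,0,\textbf{0}\},\{w_0,0,\textbf{0}\})=0$ forces $w_0=0$; this is the local counterpart of the fact (established in \cite{ww}) that $\3bar\cdot\3bar$ is a norm on $\V_h^0$. I would verify it by a short degree count: vanishing of the stabilization gives $Q_b w_0=0$ and $Q_b(\nabla w_0)=\textbf{0}$ on $\partial T$, and testing the weak Hessian identity (\ref{2.4new}) with $\varphi=\partial^2_{ij}w_0\in P_{k-2}(T)$ makes the boundary terms drop (the trace of $\partial^2_{ij}w_0$ and of $\partial_j\partial^2_{ij}w_0$ lie in $P_{k-2}(e)$, against which $w_0$ and $\nabla w_0$ integrate to zero), whence $\partial^2_{ij}w_0=0$, so $w_0$ is linear and then $Q_b(\nabla w_0)=\textbf 0$, $Q_b w_0=0$ force $w_0=0$. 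Everything else is bookkeeping of linearity, so this solvability is the only mild difficulty.
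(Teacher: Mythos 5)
Your proof is correct and follows the same route as the paper, which simply invokes the superposition principle (via the combined identity (\ref{6.4})) without further elaboration. You have merely supplied the details the paper leaves implicit --- in particular the unique solvability of the local problems (\ref{6.1}) and (\ref{6.2}), which you verify correctly and which is indeed the only non-trivial ingredient licensing the superposition argument.
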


It is clear that $S_0$ is a linear map from $\B_h$ into $\B^0_h$.
Moreover, the following result can be verified for $S_0$.

\begin{theorem}\label{THM:theorem6.2} For any $\{w_b,\textbf{w}_g\},
\{q_b,\textbf{q}_g\}\in \B_h^0$, we have
\begin{equation}\label{6.6}
\langle
S_0(\{w_b,\textbf{w}_g\}),\{q_b,\textbf{q}_g\}\rangle_{{\mathcal
E}_h^0}=a_s(w_h,q_h),
\end{equation}
where $w_h=\{D_0(\{w_b,\textbf{w}_g\}),w_b,\textbf{w}_g\}$ and
$q_h=\{D_0(\{q_b,\textbf{q}_g\}), q_b,\textbf{q}_g\}$. In other
words, the linear map $S_0$, when restricted to the subspace
$\B_h^0$, is symmetric and positive definite.
\end{theorem}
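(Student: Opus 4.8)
The plan is to prove the identity (\ref{6.6}) first, and then read off symmetry and positive definiteness of $S_0$ on $\B_h^0$ as immediate consequences. The engine of the whole argument is the combined local identity (\ref{6.4}) in the homogeneous case $f=0$, together with a careful edge-by-edge bookkeeping that matches the element-wise boundary pairings to the edge-based similarity defining $S_0$.

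First I would unfold the left-hand side of (\ref{6.6}). Starting from $\zeta_h = L_0(\{w_b,\textbf{w}_g\})$ and the definition of $b_T$, consider the element-wise sum $\sum_{T\in{\cal T}_h} b_T(q_h,\zeta_{h,T}) = \sum_{T} \langle \{q_b,\textbf{q}_g\},\zeta_{h,T}\rangle_{\partial T}$. Regrouping edge by edge, each interior edge $e\in\E_h^0$ shared by $T_1$ (left) and $T_2$ (right) contributes $\langle \{q_b,\textbf{q}_g\},\zeta_{h,L}\rangle_e + \langle \{q_b,\textbf{q}_g\},\zeta_{h,R}\rangle_e$. Because $\{q_b,\textbf{q}_g\}\in\B_h^0\subset\B_h$ is single-valued on each edge (it is the restriction of a function in $\V_h$), the left and right copies coincide and factor out, so this contribution equals $\langle \{q_b,\textbf{q}_g\},\zeta_{h,L}+\zeta_{h,R}\rangle_e = \langle \{q_b,\textbf{q}_g\},S_0(\{w_b,\textbf{w}_g\})\rangle_e$ by (\ref{6.3}). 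On boundary edges the factor $\{q_b,\textbf{q}_g\}$ vanishes since $\{q_b,\textbf{q}_g\}\in\B_h^0$, so those terms drop. Collecting, $\sum_{T} b_T(q_h,\zeta_{h,T}) = \langle S_0(\{w_b,\textbf{w}_g\}),\{q_b,\textbf{q}_g\}\rangle_{\E_h^0}$. Next I would invoke (\ref{6.4}) with $f=0$, namely $b_T(v,\zeta_{h,T}) = a_{s,T}(w_h,v)$ for all $v\in\W_k(T)$, choosing the admissible test function $v = q_h|_T = \{D_0(\{q_b,\textbf{q}_g\}),q_b,\textbf{q}_g\}$ to get $b_T(q_h,\zeta_{h,T}) = a_{s,T}(w_h,q_h)$. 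Summing over $T$ and combining with the previous display yields $\langle S_0(\{w_b,\textbf{w}_g\}),\{q_b,\textbf{q}_g\}\rangle_{\E_h^0} = a_s(w_h,q_h)$, which is exactly (\ref{6.6}).

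Symmetry then follows at once: $a_s = a + s$ is a symmetric bilinear form, so $a_s(w_h,q_h) = a_s(q_h,w_h)$, and applying (\ref{6.6}) to both orderings gives $\langle S_0(\{w_b,\textbf{w}_g\}),\{q_b,\textbf{q}_g\}\rangle_{\E_h^0} = \langle S_0(\{q_b,\textbf{q}_g\}),\{w_b,\textbf{w}_g\}\rangle_{\E_h^0}$. For positive definiteness I would set $\{q_b,\textbf{q}_g\}=\{w_b,\textbf{w}_g\}$ in (\ref{6.6}) to obtain $\langle S_0(\{w_b,\textbf{w}_g\}),\{w_b,\textbf{w}_g\}\rangle_{\E_h^0} = a_s(w_h,w_h) = \3bar w_h\3bar^2 \ge 0$. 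Here $w_h=\{D_0(\{w_b,\textbf{w}_g\}),w_b,\textbf{w}_g\}$ lies in $\V_h^0$, since its boundary components are single-valued across interior edges and vanish on $\partial\Omega$. As $\3bar\cdot\3bar$ is a norm on $\V_h^0$, vanishing of this quadratic form forces $w_h=0$, hence $\{w_b,\textbf{w}_g\}=0$, giving positive definiteness.

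The main obstacle I anticipate is the bookkeeping in the second paragraph — correctly identifying $\sum_{T} b_T(q_h,\zeta_{h,T})$ with the edge-based pairing against $S_0$. The two facts that make it work, and which must be used with care, are that $q_h\in\V_h$ is single-valued on each interior edge (so the left and right copies of $\{q_b,\textbf{q}_g\}$ agree and convert $\zeta_{h,L}+\zeta_{h,R}$ into the similarity $S_0$), and that $\{q_b,\textbf{q}_g\}\in\B_h^0$ annihilates all boundary-edge contributions. Once this identification is secured, everything else is routine linear algebra built on the local identity (\ref{6.4}).
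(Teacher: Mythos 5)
Your proof is correct and follows essentially the same route as the paper: both establish (\ref{6.6}) by the chain $\langle S_0(\{w_b,\textbf{w}_g\}),\{q_b,\textbf{q}_g\}\rangle_{\E_h^0}=\sum_{T}\langle\zeta_{h,T},\{q_b,\textbf{q}_g\}\rangle_{\partial T}=\sum_{T}b_T(q_h,\zeta_{h,T})=\sum_{T}a_{s,T}(w_h,q_h)$ via (\ref{6.4}) with $f=0$, with your edge-by-edge regrouping just making explicit the single-valuedness and boundary-vanishing of $\{q_b,\textbf{q}_g\}$ that the paper uses silently. Your added argument for positive definiteness (that $w_h\in\V_h^0$ and $a_s(w_h,w_h)=\3bar w_h\3bar^2$ is a norm there) is a correct filling-in of a step the paper only asserts.
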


\begin{proof} For any
$\{w_b,\textbf{w}_g\}$, $\{q_b,\textbf{q}_g\}\in \B_h^0$, let
\begin{eqnarray*}
&w_h=\{D_0(\{w_b,\textbf{w}_g\}),w_b,\textbf{w}_g\},\qquad
&\zeta_h=L_0(\{w_b,\textbf{w}_g\}),\\
&q_h=\{D_0(\{q_b,\textbf{q}_g\}), q_b,\textbf{q}_g\}, \qquad
&\eta_h=L_0(\{q_b,\textbf{q}_g\}).
\end{eqnarray*}
Using (\ref{6.4}) with $f=0$ we arrive at
\begin{equation*}
\begin{split}
\langle
S_0(\{w_b,\textbf{w}_g\}),\{q_b,\textbf{q}_g\}\rangle_{{\mathcal
E}_h^0}=&\sum_{e\in {\mathcal E}_h^0}\langle
\similarity{ \zeta_h}_e,\{q_b,\textbf{q}_g\}\rangle_e\\
=& \sum_{T\in {\cal T}_h}\langle\zeta_{h,T},\{q_b,\textbf{q}_g\}\rangle_{\partial T}\\
=&\sum_{T\in {\cal T}_h} b_T(q_h,\zeta_{h,T})\\
=&\sum_{T\in {\cal T}_h} a_{s,T}(w_h,q_h),
\end{split}
\end{equation*}
 which
completes the proof.
\end{proof}

\begin{lemma}\label{lem7.2} Let
$(u_h;\lambda_h)=(\{u_0,u_b,\textbf{u}_g\};\lambda_h)\in \W_h\times
\Xi_h$ be the unique solution of the hybridized WG algorithm
(\ref{2.7})-(\ref{2.7q2}). Then, $u_h\in \V_h$ and
$\{u_b,\textbf{u}_g\}$ is well defined  in the space $\B_h$.
Moreover, they satisfy the following equation
\begin{equation}\label{6.7}
S_f(\{u_b,\textbf{u}_g\})=\{0,\textbf{0}\}.
\end{equation}
\end{lemma}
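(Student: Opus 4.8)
The plan is to run the three-step construction defining $S_f$ on the input $\{u_b,\textbf{u}_g\}$ and to show that the similarity produced in Step 3 vanishes. First I would record the well-definedness claim: by Theorem \ref{lem3.1} the first component $u_h$ of the HWG solution already lies in $\V_h$, so $\jump{u_h}_e=\{0,\textbf{0}\}$ on every interior edge. Hence the two traces of $\{u_b,\textbf{u}_g\}$ from adjacent elements coincide, and $\{u_b,\textbf{u}_g\}$ may legitimately be regarded as a single-valued function in $\B_h$. This settles the first two assertions of the lemma and allows $S_f(\{u_b,\textbf{u}_g\})$ to be formed.

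To reach (\ref{6.7}) I would first identify the interior part manufactured in Step 1. Testing the HWG equation (\ref{2.7}) with $v=\{v_0,0,\textbf{0}\}\in\W_h^0$ annihilates $b(v,\lambda_h)$ and yields $a_{s,T}(u_h,v)=(f,v_0)_T$ on each $T$, which is exactly the local problem (\ref{6.1}) defining $D_f$. By the unique solvability of that local solve one gets $u_0=D_f(\{u_b,\textbf{u}_g\})$, so the function $w_h$ assembled in Step 1 from the data $\{u_b,\textbf{u}_g\}$ is precisely $u_h$ itself.

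Next I would compare the multiplier $\zeta_h=L_f(\{u_b,\textbf{u}_g\})$ of Step 2 with $\lambda_h$. Since $w_h=u_h$, the combined identity (\ref{6.4}) reads $b_T(v,\zeta_{h,T})=a_{s,T}(u_h,v)-(f,v_0)_T$ for all $v\in\W_k(T)$. Summing over $T$ and subtracting (\ref{2.7}), rewritten as $\sum_T\big(a_{s,T}(u_h,v)-(f,v_0)_T\big)=b(v,\lambda_h)$, produces $\sum_{T}\big(b_T(v,\zeta_{h,T})-b_T(v,\lambda_h)\big)=0$ for every $v\in\W_h^0$. Because the interior-edge components of functions in $\W_h^0$ are completely uncorrelated across elements, I can test with $v$ supported on a single side of one interior edge; nondegeneracy of the $L^2$ pairing on $[P_{k-2}(e)]^d\times P_{k-2}(e)$ then forces $\zeta_{h,T}=\lambda_h|_{\partial T}$ on every interior edge for each incident element. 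Step 3 reads the similarity there, so $S_f(\{u_b,\textbf{u}_g\})|_e={\zeta_h}_{L}+{\zeta_h}_{R}=\similarity{\lambda_h}_e$, which equals $\{0,\textbf{0}\}$ since $\lambda_h\in\Xi_h$; on boundary edges the value is $\{0,\textbf{0}\}$ by the definition in (\ref{6.3}). This establishes (\ref{6.7}).

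The step I expect to require the most care is the localization: equation (\ref{2.7}) is a single global statement, whereas (\ref{6.1})–(\ref{6.2}) are element-local, and on elements touching $\partial\Omega$ the admissible test functions in $\W_h^0$ are constrained to vanish on the boundary edges, so the matching $\zeta_{h,T}=\lambda_h|_{\partial T}$ can only be deduced on interior edges. The point I would emphasize is that this boundary gap is harmless: Step 3 by construction uses only the similarity of $\zeta_h$ on interior edges, where the identification with $\lambda_h$ is unconditional, and the definition directly zeroes out the boundary contribution.
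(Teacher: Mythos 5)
Your proposal is correct and follows essentially the same route as the paper: invoke Theorem \ref{lem3.1} for $u_h\in\V_h$, localize the test functions in (\ref{2.7}) to identify the outputs of Steps 1--2 with $u_0$ and $\lambda_h$, and conclude from $\similarity{\lambda_h}=\{0,\textbf{0}\}$ since $\lambda_h\in\Xi_h$. Your explicit remark that the identification $\zeta_{h,T}=\lambda_h|_{\partial T}$ is only forced on interior edges (because test functions in $\W_h^0$ vanish on $\partial\Omega$), and that this is harmless since Step 3 discards boundary-edge values, is a point the paper's proof glosses over.
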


\begin{proof} Since $(u_h;\lambda_h)$ is the unique solution
 of the hybridized WG algorithm (\ref{2.7})-(\ref{2.7q2}), then we have from
 Lemma \ref{lem3.1} that $\jump{u_h}_e=0$ on each
interior edge or flat face $e\in {\cal E}_h^0$. Furthermore, on each
boundary edge, we have $u_b=Q_b\xi$, $\textbf{u}_g\cdot
\textbf{n}=Q_{b}\nu$, $\textbf{u}_g\cdot
\boldsymbol{\tau}=Q_{b}(\nabla\xi\cdot\btau)$. Thus, $u_h\in \V_h$
and its restriction on ${\mathcal E}_h$ is a well defined function
in the space $\B_h$.

Now in (\ref{2.7}), choose $v=\{v_0,0,\textbf{0}\}\in \W_k(T)$ on
$T$ and zero elsewhere. Then,
$$
a_{s,T}(u_h,v)=(f,v_0)_T,\  \forall v=\{v_0,0,\textbf{0}\}\in
\W_k(T).
$$
This implies that $u_h$ satisfies the local equation (\ref{6.1}).

Next in (\ref{2.7}), choose $v=\{0,v_b,\textbf{v}_g\}\in W_k(T)$ on
$T$ and zero elsewhere. Then,
$$
b_T(\lambda_{h,T},v)=a_{s,T}(u_h,v),\  \forall
v=\{0,v_b,\textbf{v}_g\}\in \W_k(T),
$$
where $\lambda_{h,T}$ is the restriction of $\lambda_h$ on the
boundary of $T$. This means that $\lambda_h$ satisfies (\ref{6.2}).

From the definition of the operator $S_f$, we have on interior edges
$$
S_f(\{u_b,\textbf{u}_g\})=\similarity{\lambda_h}.
$$
$\lambda_h\in \Xi_h$ implies $ \similarity{ \lambda_h}
=\{0,\textbf{0}\}$, and hence
$S_f(\{u_b,\textbf{u}_g\})=\{0,\textbf{0}\}$. This completes the
proof of the theorem.
\end{proof}

\begin{lemma}\label{lem7.3} Let
$\{\overline{u}_b,\overline{\textbf{u}}_g\}\in \B_h$ satisfy $
\overline{u}_b=Q_b\xi$ and $\overline{\textbf{u}}_g\cdot
\textbf{n}=Q_{b}\nu$, $\overline{\textbf{u}}_g\cdot
\boldsymbol{\tau}=Q_{b}(\nabla\xi\cdot\btau)$ on $\partial\Omega$
and the following operator equation
\begin{equation}\label{6.10}
S_f(\{\overline{u}_b,\overline{\textbf{u}}_g\})=\{0,\textbf{0}\}.
\end{equation}
Then, $\overline{u}_h=\{\overline{u}_0,\overline{u}_b,
\overline{\textbf{u}}_g\}\in \V_h$ is the solution of the WG
algorithm (\ref{algori1}). Here $\overline{u}_0$ is the solution of
the following local problems on each element $T\in {\cal T}_h$,
\begin{equation}\label{6.11}
a_{s,T}(\overline{u}_h,v)=(f,v_0)_T,\  \forall
v=\{v_0,0,\textbf{0}\}\in \W_k(T).
\end{equation}
\end{lemma}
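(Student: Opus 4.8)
The plan is to establish the converse of Lemma \ref{lem7.2}: starting from interface data satisfying the Schur complement equation (\ref{6.10}) together with the prescribed boundary values, and defining the interior component by the local solves (\ref{6.11}), I would show that the resulting $\overline{u}_h$ lies in $\V_h$ and satisfies the WG variational equation (\ref{algori1}). Since (\ref{algori1}) is uniquely solvable (as used in Theorem \ref{lem3.1}), verifying the equation together with the correct essential data identifies $\overline{u}_h$ as \emph{the} WG solution.

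First I would check membership $\overline{u}_h\in\V_h$. Because the pair $\{\overline{u}_b,\overline{\textbf{u}}_g\}$ is drawn from the single-valued interface space $\B_h$, the patched weak function is automatically continuous across every interior edge, i.e.\ $\jump{\overline{u}_h}_e=\{0,\textbf{0}\}$ for $e\in\E_h^0$; the hypotheses $\overline{u}_b=Q_b\xi$ and $\overline{\textbf{u}}_g\cdot\textbf{n}=Q_b\nu$, $\overline{\textbf{u}}_g\cdot\btau=Q_b(\nabla\xi\cdot\btau)$ on $\partial\Omega$ supply the boundary data, so $\overline{u}_h\in\V_h$ with the correct essential conditions.

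Next, set $\zeta_h=L_f(\{\overline{u}_b,\overline{\textbf{u}}_g\})$; since $\overline{u}_0=D_f(\{\overline{u}_b,\overline{\textbf{u}}_g\})$ by construction, the identity (\ref{6.4}) gives, on each $T\in\T_h$ and every $v=\{v_0,v_b,\textbf{v}_g\}\in\W_k(T)$,
$$
b_T(v,\zeta_{h,T})=a_{s,T}(\overline{u}_h,v)-(f,v_0)_T.
$$
Summing over $T\in\T_h$ and restricting to $v\in\V_h^0$ yields
$$
\sum_{T\in\T_h}b_T(v,\zeta_{h,T})=a_s(\overline{u}_h,v)-(f,v_0),
$$
so (\ref{algori1}) follows once I show the left-hand side vanishes for all $v\in\V_h^0$.

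The key step is to regroup $\sum_T b_T(v,\zeta_{h,T})=\sum_T\langle v,\zeta_{h,T}\rangle_{\partial T}$ edge by edge. On an interior edge $e$ shared by $T_1$ and $T_2$, the continuity of $v\in\V_h$ lets me factor out the common trace $\{v_b,\textbf{v}_g\}$, so the two element contributions combine into $\langle\{v_b,\textbf{v}_g\},(\zeta_h)_L+(\zeta_h)_R\rangle_e=\langle\{v_b,\textbf{v}_g\},\similarity{\zeta_h}_e\rangle_e$. By Step 3 in the definition of $S_f$, one has $\similarity{\zeta_h}_e=S_f(\{\overline{u}_b,\overline{\textbf{u}}_g\})$ on interior edges, which vanishes by the hypothesis (\ref{6.10}); on boundary edges $v\in\V_h^0$ forces $v_b=0$, $\textbf{v}_g=\textbf{0}$, so those contributions vanish as well. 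Hence $\sum_T b_T(v,\zeta_{h,T})=0$, giving $a_s(\overline{u}_h,v)=(f,v_0)$ for all $v\in\V_h^0$, i.e.\ (\ref{algori1}). I expect this regrouping to be the main obstacle: one must track the left/right orientation conventions carefully so that pairing the \emph{continuous} trace $v$ against $\zeta_h$ produces the similarity $\similarity{\zeta_h}$ rather than a jump, which is exactly what couples the computation to the hypothesis $S_f=\{0,\textbf{0}\}$.
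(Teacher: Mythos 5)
Your proof is correct, but it takes a more direct route than the paper's. The paper's own proof constructs the full Lagrange multiplier: it solves (\ref{6.12}) for $\overline{\lambda}_{h,T}$ on each element, modifies it to vanish on $\partial\Omega$, uses the hypothesis (\ref{6.10}) to conclude $\similarity{\overline{\lambda}_h}=\{0,\textbf{0}\}$ so that $\overline{\lambda}_h\in\Xi_h$, verifies that the pair $(\overline{u}_h;\overline{\lambda}_h)$ satisfies both HWG equations (\ref{2.7})--(\ref{2.7q2}), and then invokes the HWG--WG equivalence of Theorem \ref{lem3.1}. You instead bypass the HWG system entirely: starting from the same local identity (\ref{6.4}), you test only against continuous $v\in\V_h^0$, regroup $\sum_T b_T(v,\zeta_{h,T})$ edge by edge so that the single-valued trace of $v$ pairs with $\similarity{\zeta_h}_e=S_f(\{\overline{u}_b,\overline{\textbf{u}}_g\})=\{0,\textbf{0}\}$, and conclude (\ref{algori1}) directly, finishing with uniqueness of the WG solution. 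Your edge-by-edge regrouping is sound (for $v\in\V_h$ the two element traces on an interior edge coincide, so the two contributions sum to the similarity, and boundary edges drop out since $v\in\V_h^0$), and it has the small advantage of never needing the boundary modification of the multiplier or the membership $\zeta_h\in\Xi_h$. What the paper's longer route buys is the explicit identification of $(\overline{u}_h;\overline{\lambda}_h)$ as the HWG solution, which ties the lemma back to Lemma \ref{lem7.2} and the hybridized framework; your argument establishes only the claim about $\overline{u}_h$, which is all the lemma asserts.
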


\begin{proof} Let $\{\overline{u}_b,\overline{\textbf{u}}_g\}\in \B_h$
satisfy the operator equation (\ref{6.10}) and the said boundary
condition. Let $\overline{u}_0$ be given by the local equations
(\ref{6.11}). Now on each element $T$, we compute
$\overline{\lambda}_{h,T}\in \Lambda_k(\partial T)$ by solving the
local problem
\begin{equation}\label{6.12}
b_T(v,\overline{\lambda}_{h,T})=a_{s,T}(\overline{u}_h,v), \quad
\forall v=\{0,v_b,\textbf{v}_g\}\in \W_k(T).
\end{equation}
This defines a function $\overline{\lambda}_{h}\in \Lambda_h$ given
by $\overline{\lambda}|_{\partial T}=\overline{\lambda}_{h,T}$ with
modification $\overline{\lambda}|_{\partial\Omega}=0$. From the
definition of the operator $S_f$, on each interior edge
$e\in\E_h^0$, we have
$$
S_f(\{\overline{u}_b,\overline{\textbf{u}}_g\})=
\similarity{\overline{\lambda}_h},
$$
which, together with (\ref{6.10}) leads to
\begin{equation}\label{6.13}
\similarity{\overline{\lambda}_h} =\{0,\textbf{0}\}
\end{equation}
on each interior edge. Thus, $\overline{\lambda}_h\in \Xi_h$.

Subtracting (\ref{6.12}) from (\ref{6.11}) gives
$$
a_{s,T}(\overline{u}_h,v)-b_T(v,\overline{\lambda}_{h,T})=(f,v_0)_T,
\quad\forall v=\{v_0,v_b,\textbf{v}_g\}\in \W_k(T).
$$
Summing up the above equation over all elements $T\in {\cal T}_h$
gives
\begin{equation}\label{night-late:001}
a_s(\overline{u}_h,v)-b(v,\overline{\lambda}_{h})=(f,v_0), \quad
\forall v=\{v_0,v_b,\textbf{v}_g\}\in \W_h^0.
\end{equation}
Note that the above equation holds true only for test functions $v$
with vanishing boundary value since $\lambda_h$ was modified from
$\lambda_{h,T}$ on the boundary of the domain.

For any $\sigma$ in the finite element space $\Xi_h$, we have from
(\ref{EQ:b-form}) that
\begin{equation}\label{night-late:002}
b(\overline{u}_h,\sigma)=\sum_{e\subset {\mathcal E}_h^0}\langle
\jump{\overline{u}_h}_e,\sigma_L\rangle_e=0.
\end{equation}
The equations (\ref{night-late:001}) and (\ref{night-late:002})
indicate that $(\overline{u}_h; \overline{\lambda}_{h})$ is a
solution to the hybridized WG scheme (\ref{2.7})-(\ref{2.7q2}).
Recall that on the boundary $\partial\Omega$, we have $
\overline{u}_b=Q_b\xi$ and $\overline{\textbf{u}}_g\cdot
\textbf{n}=Q_{b}\nu$, $\overline{\textbf{u}}_g\cdot
\boldsymbol{\tau}=Q_{b}(\nabla\xi\cdot\btau)$. Thus, using Theorem
\ref{lem3.1} we see that $\overline{u}_h$ is the WG solution defined
by the formulation (\ref{algori1}). This completes the proof of the
theorem.
\end{proof}

The results developed in Lemmas \ref{lem7.2} -\ref{lem7.3} can be
summarized as follows.

\begin{theorem} Let $\{\overline{u}_b,\overline{\textbf{u}}_g\}\in \B_h$
be any function such that $ \overline{u}_b=Q_b\xi$ and
$\overline{\textbf{u}}_g\cdot \textbf{n}=Q_{b}\nu$,
$\overline{\textbf{u}}_g\cdot
\boldsymbol{\tau}=Q_{b}(\nabla\xi\cdot\btau)$ on $\partial\Omega$.
Define $\overline{u}_0$ as the solution of (\ref{6.11}). Then,
$\overline{u}_h=\{\overline{u}_0,
\overline{u}_b,\overline{\textbf{u}}_g\}$ is the solution of
(\ref{algori1}) if any only if
$\{\overline{u}_b,\overline{\textbf{u}}_g\}$ satisfies the following
operator equation
\begin{equation}\label{6.15}
S_f(\{\overline{u}_b,\overline{\textbf{u}}_g\})=\{0,\textbf{0}\}.
\end{equation}
\end{theorem}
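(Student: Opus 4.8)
The final theorem is an "if and only if" characterization, and the two directions are precisely Lemmas~\ref{lem7.2}--\ref{lem7.3} specialized to the problem data. So the plan is to prove it by combining these two lemmas, after first reconciling a small gap: Lemma~\ref{lem7.2} produces the operator equation from the \emph{HWG} solution, while here we start from an arbitrary interface function satisfying the boundary data and~\eqref{6.11}. The cleanest route is to route everything through the equivalence of HWG and WG (Theorem~\ref{lem3.1}) together with the unique solvability of~\eqref{algori1}.

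First I would prove the forward direction ($\Rightarrow$). Suppose $\overline{u}_h=\{\overline{u}_0,\overline{u}_b,\overline{\textbf{u}}_g\}$ solves~\eqref{algori1} with $\overline{u}_0$ given by~\eqref{6.11}. By Theorem~\ref{lem3.1}, the WG solution coincides with the first component $u_h$ of the unique HWG solution $(u_h;\lambda_h)$; in particular $\{\overline{u}_b,\overline{\textbf{u}}_g\}=\{u_b,\textbf{u}_g\}$ as elements of $\B_h$. Lemma~\ref{lem7.2} then gives immediately $S_f(\{\overline{u}_b,\overline{\textbf{u}}_g\})=S_f(\{u_b,\textbf{u}_g\})=\{0,\textbf{0}\}$, which is~\eqref{6.15}.

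Next the reverse direction ($\Leftarrow$). Suppose $\{\overline{u}_b,\overline{\textbf{u}}_g\}\in\B_h$ satisfies the boundary conditions and the operator equation~\eqref{6.15}, and define $\overline{u}_0$ by~\eqref{6.11}. This is exactly the hypothesis of Lemma~\ref{lem7.3}, whose conclusion is that $\overline{u}_h=\{\overline{u}_0,\overline{u}_b,\overline{\textbf{u}}_g\}\in\V_h$ solves the WG algorithm~\eqref{algori1}. So this direction is a direct citation of Lemma~\ref{lem7.3}.

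\textbf{Main obstacle.} The only genuine subtlety is well-definedness in the forward direction: I must argue that the \emph{trace data} $\{\overline{u}_b,\overline{\textbf{u}}_g\}$ of a WG solution really lands in $\B_h$ (single-valued on interior edges) so that $S_f$ can be applied to it. This follows because $\overline{u}_h\in\V_h$ forces $\jump{\overline{u}_h}_e=\{0,\textbf{0}\}$, so the two traces across each interior edge agree; this is the same continuity observation established in Theorem~\ref{lem3.1} and reused in Lemma~\ref{lem7.2}. Once that is noted, both implications are immediate from the cited lemmas, and I would close by remarking that uniqueness of the WG solution~\eqref{algori1} guarantees the characterization is sharp, so the reduced Schur system $S_f(\{\overline{u}_b,\overline{\textbf{u}}_g\})=\{0,\textbf{0}\}$ has a unique solution on $\B_h$ subject to the prescribed boundary values.
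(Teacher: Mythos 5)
Your proposal is correct and matches the paper's intent exactly: the paper gives no separate proof, explicitly presenting this theorem as a summary of Lemmas \ref{lem7.2} and \ref{lem7.3}, which is precisely how you argue the two directions (with Theorem \ref{lem3.1} supplying the identification of the WG solution with the HWG solution in the forward direction). Your added remarks on the well-definedness of the trace data in $\B_h$ and on uniqueness are consistent with the paper and fill in details the authors left implicit.
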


\subsection{Computational algorithm with reduced variables}

From (\ref{6.5}), the operator equation (\ref{6.15}) can be
rewritten as
\begin{equation}\label{6.16}
S_0(\{\overline{u}_b,\overline{\textbf{u}}_g\})=-S_f(\{0,\textbf{0}\}).
\end{equation}
Let $\{G_b,\textbf{G}_g\}\in \B_h$ be a finite element function
satisfying $ G_b=Q_b\xi$,  $ \textbf{G}_g\cdot \textbf{n}=Q_{b}\nu$
and $ \textbf{G}_g\cdot
\boldsymbol{\tau}=Q_{b}(\nabla\xi\cdot\btau)$ on $\partial\Omega$
and zero elsewhere. It follows from the linearity of $S_0$ that
$$
 S_0(\{\overline{u}_b,\overline{\textbf{u}}_g\})=
 S_0(\{\overline{u}_b,\overline{\textbf{u}}_g\}-\{G_b,\textbf{G}_g\})+S_0(\{G_b,\textbf{G}_g\}).
$$
Substituting the above into (\ref{6.16}) yields
$$
 S_0(\{\overline{u}_b,\overline{\textbf{u}}_g\}-\{G_b,\textbf{G}_g\})=
 -S_f(\{0,\textbf{0}\})-S_0(\{G_b,\textbf{G}_g\}).
$$
Note that the function
$\{p_b,\textbf{p}_g\}=\{\overline{u}_b,\overline{\textbf{u}}_g\}-\{G_b,\textbf{G}_g\}$
has vanishing boundary value. By setting $\{r_b,\textbf{r}_g\}=
-S_f(\{0,\textbf{0}\})-S_0(\{G_b,\textbf{G}_g\})$, we have
\begin{equation}\label{6.17}
S_0(\{p_b,\textbf{p}_g\})=\{r_b,\textbf{r}_g\}.
\end{equation}
The reduced system of linear equations (\ref{6.17}) is actually a
Schur complement formulation for the WG algorithm (\ref{algori1}).
Note that (\ref{6.17}) involves only the variables representing the
value of the function on $\E_h^0$. This is clearly a significant
reduction on the size of the linear system that has to be solved in
the WG finite element method.

\begin{variable-reduction}\label{Alg:variable-reduction} The solution
$u_h=\{u_0,u_b,\textbf{u}_g\}$ to the WG algorithm (\ref{algori1})
can be obtained step-by-step as follows:

\begin{itemize}
\item[(1)] On each element $T$, compute
$$
r_h = -S_f(\{0,\textbf{0}\})-S_0(\{G_b,\textbf{G}_g\}).
$$
This task requires the inversion of local stiffness matrices and can
be accomplished in parallel. The computational complexity is linear
with respect to the number of unknowns.

\item[(2)] Compute $\{p_b,\textbf{p}_g\}\in \B_h^0$ by solving the system of
linear equations (\ref{6.17}). This step requires an efficient
linear solver.

\item[(3)] Compute
$\{u_b,\textbf{u}_g\}=\{p_b,\textbf{p}_g\}+\{G_b,\textbf{G}_g\}$ to
get the solution on element boundaries. Then, on each element $T$,
compute $u_0=D_f(\{u_b,\textbf{u}_g\})$ by solving the local problem
(\ref{6.1}). This task can be accomplished in parallel, and the
computational complexity is proportional to the number of unknowns.
\end{itemize}

\end{variable-reduction}

Step (2) in the {\sc Variable Reduction Algorithm
}\ref{Alg:variable-reduction} is the only computation-extensive part
of the implementation. Note that, due to Theorem
\ref{THM:theorem6.2}, the reduced system (\ref{6.17}) is symmetric
and positive definite. Preconditioning techniques should be applied
for an efficient solving of (\ref{6.17}). This is left to interested
readers for an investigation.

\end{document}